\theoremstyle{plain}
\newtheorem{Def}{Definition}
\newtheorem{Thm}{Theorem}
\newtheorem{Rmk}[Thm]{Remark}
\newtheorem{Cor}[Thm]{Corollary}
\newtheorem{Lem}[Thm]{Lemma}
\newtheorem{Pro}[Thm]{Proposition}
\newcommand{\NN}{\mathbb{N}}
\newcommand{\FANF}{\textrm{FAN}\ensuremath{_{\mathrm{full}}}\xspace}
\newcommand{\FAND}{\textrm{FAN}\ensuremath{_{\Delta}}\xspace}
\newcommand{\FANP}{\textrm{FAN}\ensuremath{_{\Pi^{0}_{1}}}\xspace}
\newcommand{\FANc}{\textrm{FAN}\ensuremath{_{c}}\xspace}
\newcommand{\ext}[1]{\llbracket #1 \rrbracket}
\begin{document}
\title{Separating the Fan Theorem and Its Weakenings}

\author{Robert S. Lubarsky and Hannes Diener \\ Dept. of Mathematical
Sciences \\ Florida Atlantic University \\ Boca Raton, FL 33431 \\
Robert.Lubarsky@alum.mit.edu \\ Department Mathematik, Fak. IV \\
Emmy-Noether-Campus, Walter-Flex-Str. 3 \\ University of Siegen \\
57068 Siegen, Germany \\ diener@math.uni-siegen.de} \maketitle
\begin{abstract}
Varieties of the Fan Theorem have recently been developed in
reverse constructive mathematics, corresponding to different
continuity principles. They form a natural implicational
hierarchy. Some of the implications have been shown to be strict,
others strict in a weak context, and yet others not at all, using
disparate techniques. Here we present a family of related Kripke
models which separates all of the as yet identified fan theorems.
\\ {\bf keywords:} fan theorems, Kripke models, forcing,
non-standard models, Heyting-valued models\\{\bf AMS 2010 MSC:}
03C90, 03F50, 03F60, 03H05
\end{abstract}

\section{Introduction}

To be able to talk about fans, Cantor space, and similar objects
properly, we will start by introducing some notation. The space of
all infinite binary sequences, endowed with the standard topology
(wherein a basic open set is given by a finite binary sequence),
will be denoted by $2^{\mathbb{N}}$; the set of all finite binary
sequences will be denoted by $2^{\ast}$.  The concatenation of
$u,v \in 2^{\ast}$ will be denoted by $u \ast v$. For $\alpha \in
2^{\mathbb{N}}$ and $n \in \mathbb{N}$, the first $n$ elements of
$\alpha$ form a finite sequence denoted by $\overline{\alpha} n$.
A subset $B \subseteq 2^{\ast}$ is called a {\bf bar} if
\[\forall \alpha \in 2^{\mathbb{N}} \exists n \in \mathbb{N}
(\overline{\alpha}n \in B ), \]
and a bar is called {\bf uniform} if
\[\exists n \in \mathbb{N} \; \forall \alpha \in 2^{\mathbb{N}} \; \exists
m \leqslant n \; (\overline{\alpha}m \in B ). \]

Notice that if a bar $B$ is closed under extensions, that is if
\[ \forall u \in 2^{\ast} (u \in B \implies \forall v \in 2^{\ast}  \; u\ast
v \in B), \] then it is uniform if and only if \[\exists n \in
\mathbb{N} \; \forall \alpha \in 2^{\mathbb{N}} \;
(\overline{\alpha}n \in B ). \] Not all of the bars we consider
will be closed under extensions.

There are currently four versions of Brouwer's Fan Theorem in
common use. All of them enable one to conclude that a given bar is
uniform. The differences among them lie in the definitional
complexity demanded (as an upper bound) of the bar in order for
the theorem to apply to it, which ranges from the very strongest
requirement to no restriction on the bar at all. A bar $C \subset
2^ \ast $ is {\bf decidable} if it is decidable as a set:
    \[\forall u \in 2^* \; u \in C \vee u \not \in C.\]
A bar $C \subset 2^\ast $ is called a {\bf $c$-bar} if there
exists a decidable set $C' \subset 2^\ast $ such that
    \[ u \in C \iff \forall v \in 2^\ast \, \left(u \ast v \in C'\right).\]
A bar $B \subset 2^\ast $ is called a {\bf ${\mathbf
\Pi_{1}^0}$-bar} if there exist a decidable set $S \subset
2^{\ast} \times \NN$ such that
\[u \in B \iff \forall n (u,n) \in S \ .  \]
(The $\Pi^0_n$-nomenclature alludes to the arithmetical hierarchy
in computability theory.) We can now state four commonly used
versions of the Fan Theorem.
\begin{quote}
\begin{tabular}{ll}
    \FAND: & Every decidable bar is uniform. \\
    \FANc: &Every $c$-bar is uniform. \\
    \FANP: & Every $\Pi^0_1$-bar is uniform. \\
    \FANF: & Every bar is uniform.
\end{tabular}
\end{quote}

Notice that every decidable bar can be taken to be closed under
extensions; that is, the closure of a decidable bar under
extension is still decidable. If there is no restriction on the
definability of a bar, then every bar can be taken to be so
closed, by working with the closure of any given bar. Every
$c$-bar is already closed under extension. In contrast,
$\Pi^0_1$-bars seemingly cannot be replaced by their closures
while remaining $\Pi^0_1$.

By way of motivation, these principles were developed within
reverse constructive mathematics, because they are equivalent with
certain continuity principles. In particular, over a weak base
theory, \FAND is equivalent with the assertion that every
uniformly continuous, positively valued function from [0,1] to
$\mathbb{R}$ has a positive infimum~\cite {JR}, \FANc with the
uniform continuity of every continuous $f : 2^\NN \rightarrow \NN$
\cite {jB06}, and \FANP with the uniform equicontinuity of every
equicontinuous sequence of functions from [0,1] to $\mathbb{R}$
\cite {DL}.

The following implications hold trivially \cite{jB06,hD08b} and
over a weak base theory:

\begin{equation*} \label{Equ:impl2}
\FANF \implies \FANP \implies \FANc \implies \FAND.
\end{equation*}
One naturally wonders whether any of the implications can be
reversed, including whether \FAND is outright provable in
constructive set theory. Some such non-implications have already
been determined.

\begin {itemize}
\item It is well-known (see \cite{Bees} for instance) that \FAND is
not provable, via recursive realizability. That is, there is an
infinite (Turing) computable sub-tree of $2^\ast$ with no infinite
computable branch, which fact translates to a failure of \FAND
under IZF (Intuitionistic ZF, the constructive correlate to
classical ZF) via recursive realizability, and also to the
independence of WKL (Weak K\"onig's Lemma) over RCA$_0$ in reverse
mathematics \cite{SS}.

\item Berger \cite{jB09} shows that \FAND does not imply \FANc over
a very weak base system. His argument is in its essence a
translation of the reverse mathematics proof that WKL$_0$ does not
imply ACA \cite {SS}, by coding the Turing jump into a $c$-bar. In
order for this argument to work, he must be in a context in which
the existence of the Turing jump is not outright provable, hence
the use of a weak base system.

\item Fourman and Hyland \cite {mF79} present a Heyting-valued,
almost topological, model in which \FANF fails; they do not
address which fragments of the Fan Theorem might hold, since these
distinctions were not available at the time. We show below that
\FANP holds in their model, separating the left-most pair of
principles in the diagram above.

\end {itemize}

We are not aware of any prior proofs separating \FANc and \FANP.

The goal of this paper is to separate all of these principles via
a uniform technique. This has several benefits. For one, it
separates \FANc and \FANP. For another, it separates \FAND and
\FANc over full IZF. That is new because Berger's argument still
leaves open the possibility that IZF would allow that implication
to go through; independence of \FANc over IZF + \FAND means that
it does not. In addition, since the arguments employed rather
handily provide four separation results, they seem to provide a
flexible tool that might be useful elsewhere. This seems not to be
the case for the other techniques that have been used. It could
well be the case, for instance, that realizability could produce
all of the results discussed here. But no one has been able to do
this yet. As for the Fourman-Hyland argument, they also show in
the same work that all topological models satisfy \FANF. So for
the separations of interest here, topological models are just out.
To be sure, variants of topological models, along the lines used
by Fourman and Hyland, might still do the trick. But before coming
up with the arguments below that's exactly what we tried, and got
nowhere. In short, we cannot say that the techniques used here get
you anything that could not be gotten by other means, but at least
it seems to be easier to use. Beyond that, it could be the case
that the proofs here really are in some sense the right ones for
these results. In the face of the perfectly nice realizability and
Heyting-valued models that provide some of these separations, we
are not at this point making that claim. While the constructions
below are natural enough, they are not so compelling as to seem
canonical. Nonetheless, since they seem to work so well, it might
be that with further reflection and development, it turns out that
proofs along these lines are the way to go for a large class of
problems.

As for what the techniques employed actually are, we would like to
provide some motivation for how we happened upon them. Since it
seemed that realizability and Heyting algebras weren't working, we
turned to the only other kind of model we know of, Kripke models.
To build a tree we could control, along with its paths, over set
theory with full Separation and Collection, we turned to forcing.
In order to have the trees be decidable, yet not completely pinned
down, as required by the theories in question, we were forced to
use non-standard integers, to provide non-standard levels on the
trees.

Since this is a paper about constructive mathematics, a word about
the meta-theory used is in order. It is classical through and
through. We work in ZFC. Presumably most if not all of the
arguments are fully constructive, as in so many mathematical
papers in all fields. We did not check, and so have no idea.

In the next section we discuss the Fan Theorem in topological and
related models, including giving a proof that the Fourman-Hyland
model satisfies \FANP. The following sections provide the
advertised separation results, going right-to-left in the diagram
above. We then close with some questions.

\section{The Fan Theorem in Heyting-Valued Models}

To make this paper somewhat self-contained, we repeat the proof
that explains why the construction afterwards is more complicated
than just a topological model.

\begin{Pro} (Fourman-Hyland \cite{mF79})
In any topological model \FANF holds.
\end{Pro}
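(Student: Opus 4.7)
The plan is to unpack satisfaction of $\FANF$ in a topological model over a space $X$ by translating a putative bar $B$ into open-set data on $X$ and then invoking the classical compactness of Cantor space. For each $s \in 2^{*}$ let $U_s = \ext{s \in B}$ denote the largest open of $X$ forcing $s \in B$. The goal is to produce an open cover $X = \bigcup_n W_n$ in which each $W_n$ forces $\forall \alpha\, \exists m \leq n\; \overline{\alpha}m \in B$.

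The main input is to apply the bar hypothesis to constant sections. For each external $\alpha \in 2^{\NN}$, the constant function $X \to 2^{\NN}$ with value $\alpha$ is a global section, and the bar condition applied to it yields $X = \bigcup_{k} U_{\overline{\alpha}k}$. Fixing $x \in X$, every external $\alpha$ admits some $k$ with $x \in U_{\overline{\alpha}k}$, so the cylinders $\{[s] : s \in 2^{*},\, x \in U_s\}$ cover the classical $2^{\NN}$; classical compactness then produces $n(x)$ such that every $\alpha$ has an initial segment $\overline{\alpha}m$ of length $m \leq n(x)$ with $x \in U_{\overline{\alpha}m}$.

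Next I would define $W_n = \{x \in X : \forall \alpha \in 2^{\NN}\; \exists m \leq n\; x \in U_{\overline{\alpha}m}\}$. Since only finitely many subsets $T \subseteq 2^{\leq n}$ yield a cylinder cover of $2^{\NN}$, one has $W_n = \bigcup_T \bigcap_{s \in T} U_s$ with $T$ ranging over those finitely many subsets, so $W_n$ is open; the previous step gives $X = \bigcup_n W_n$. To verify $W_n \Vdash \forall \alpha\, \exists m \leq n\; \overline{\alpha}m \in B$, take any open $V \subseteq W_n$ and any continuous $f : V \to 2^{\NN}$; the opens $V_s = V \cap f^{-1}([s]) \cap U_s$ for $s \in 2^{\leq n}$ cover $V$ by the pointwise defining property of $W_n$, and on each $V_s$ the initial segment $\overline{f}|s|$ is constantly $s$ while $V_s \subseteq U_s$, so $V_s \Vdash \overline{f}|s| \in B$ as required.

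The delicate point is the leap from external to internal universal quantification: in the topological model, both \emph{bar} and \emph{uniform bar} quantify over arbitrary continuous sections $V \to 2^{\NN}$, not merely over the constant ones. What makes the argument work is that the uniformity conclusion has bounded depth $n$, so a finite-level analysis of the sets $U_s$ combined with the compactness of the classical $2^{\NN}$ reduces the general-section case to the constant-section analysis carried out above.
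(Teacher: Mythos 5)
Your proof is correct, and it takes a genuinely different (though related) route from the paper's. Both arguments extract the same external information from the bar hypothesis — namely, that for every ground-model $\alpha$ one has $X = \bigcup_k \ext{\overline{\alpha}k \in B}$ — and both ultimately invoke the classical compactness of $2^{\NN}$. But where the paper argues by contradiction, you argue directly. The paper introduces the truth values $A_u = \ext{\text{the bar above } u \text{ is uniform}}$, assumes some point $p$ misses $A_{()}$, observes that $A_u = A_{u*0} \cap A_{u*1}$ makes $Tr = \{u : p \notin A_u\}$ an infinite tree, and then applies Weak K\"onig's Lemma externally to produce a bad path, which the bar condition then contradicts. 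You instead work directly with the more elementary truth values $U_s = \ext{s \in B}$, apply compactness of Cantor space pointwise to get a bound $n(x)$ for each $x$, show the sets $W_n$ of points satisfying the depth-$n$ finite combinatorial covering condition are open (as finite unions of finite intersections of the $U_s$), and then verify explicitly that $W_n$ forces the uniformity statement at depth $n$ by decomposing an arbitrary section $f$ over $V \subseteq W_n$ into the pieces $V_s = V \cap f^{-1}([s]) \cap U_s$. What your approach buys: it produces the witnessing cover explicitly rather than inferring its existence by contradiction, it does not need the reduction to bars closed under extension (your bounded $\exists m \leq n$ handles the general case), and the internal-versus-external quantification issue is confronted head-on in the last step rather than being packaged into the auxiliary opens $A_u$. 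What the paper's approach buys: brevity, since packaging uniformity-above-$u$ into $A_u$ lets the tree property $A_u = A_{u*0} \cap A_{u*1}$ do all the combinatorial work in one line, at the cost of requiring closure under extension and of being an indirect argument.
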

\begin{proof}
Let $T$ be a topological space, and suppose
\begin {center} $T
\Vdash ``B \subseteq 2^{\ast}$ is a bar closed under extension."
\end {center}
Then, in particular, for any external sequence $\alpha \in
2^{\NN}$ (that is, one from the ground model)
\begin{equation} \label{Eqn:bar}
T =\ext{\exists n \; \overline{\alpha}n \in B } = \bigcup_{n \in
\NN} \ext{\overline{\alpha}n \in B}\ .
\end{equation}
Let $A_{u} $ denote the open set
\[ \ext{\text{the bar } \{w \mid  u*w \in B\} \text{ is uniform}} \ . \]
If $T \not \Vdash ``B$ is uniform," then choose some $p \notin
A_{()}$. Define a set $Tr =\{u \in 2^{\ast} \mid p \not \in A_{u}
\}$. Since  $A_{u} = A_{u*0} \cap A_{u*1}$ for any $u \in 2^{*}$,
$Tr$ is a tree (i.e. closed under restriction) with no terminal
nodes. Since in addition $() \in Tr$ (that is, $Tr$ is non-empty),
$Tr$ is infinite. Thus, by Weak K\"onig's Lemma, there exists an
infinite path $\beta$ in $Tr$. By the definition of $Tr,$ $p
\notin A_{\overline{\beta}n}$ for all $n \in \NN$. Now Equation
\ref{Eqn:bar} yields the existence of  $n \in \NN$ such that
\[ p \in \ext{\overline{\beta}n \in B} \ ; \]
but this contradicts
$\ext{\overline{\beta}n \in B}  \subset A_{\overline{\beta}n}$.
\end{proof}

This suggests that if we are looking for models in which some form
of the Fan Theorem fails we need to ``delete points''. This was
done in \cite{mF79}, section 4, where they consider $K(T)$, the
coperfect open sets of a topological space $T$. This can be viewed
as the equivalence classes of the open sets of $T$, under which an
open set is identified with its smallest coperfect superset. In
this setting, removing a point from an open set does not change
the set.

\begin{Def} A Heyting algebra is connected if $A \vee B = \top$ and
$A \wedge B = \bot$ implies that either $A = \top$ or $A= \bot$.
\end{Def}

Let $\Omega$ be $K([0,1] \times [0,1])$. It is easy to see that
$\Omega$ is connected.

\begin{Pro} If $H$ is a connected Heyting algebra, then $H \Vdash
\FANP.$
\end{Pro}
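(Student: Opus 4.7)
The plan is to use connectedness to extract from the $\Pi^0_1$-bar $B$ a decidable bar $\tilde B$ in the ground model $V$, apply the classical fan theorem in $V$, and then push the resulting uniform bound back into $V^H$. The algebraic fact driving everything is that in a connected Heyting algebra any pair $A, B$ with $A \vee B = \top$ and $A \wedge B = \bot$ already satisfies $\{A,B\} = \{\top, \bot\}$: connectedness forces $A \in \{\top, \bot\}$, and then $B$ is determined.

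First I would unpack the $\Pi^0_1$ data: pick a name $S$ with $H \Vdash$ ``$S \subseteq 2^{\ast} \times \NN$ is decidable and $u \in B \iff \forall n\,(u,n) \in S$.'' For each standard pair $(u,n)$, decidability gives $\ext{(u,n) \in S} \vee \ext{(u,n) \notin S} = \top$ with meet $\bot$, so by the observation above $\ext{(u,n) \in S} \in \{\top, \bot\}$. Setting $\tilde S := \{(u,n) : \ext{(u,n) \in S} = \top\}$ and $\tilde B := \{u : \forall n\,(u,n) \in \tilde S\}$, the usual computation of the Heyting-valued universal quantifier over $\NN$ as a meet over standard naturals yields $\ext{u \in B} = \bigwedge_n \ext{(u,n) \in S} \in \{\top, \bot\}$, equal to $\top$ exactly when $u \in \tilde B$. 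Moreover $\tilde B$ is a bar in $V$: for any ground-model $\alpha \in 2^{\NN}$ the internal bar property reads $\top = \bigvee_n \ext{\overline{\alpha}n \in B}$, and since each joinand is in $\{\top, \bot\}$ some term must equal $\top$, yielding a prefix of $\alpha$ in $\tilde B$.

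The classical fan theorem (provable in ZFC) then supplies $N \in \NN$ such that for every $u \in 2^{N}$ some $m(u) \leq N$ has $\overline{u}m(u) \in \tilde B$. To transfer $N$ back into the model, fix any name $\alpha$ and set $p := \ext{\alpha \in 2^{\NN}}$. Since $2^{N}$ is finite and standard, $p = \bigvee_{u \in 2^{N}} \bigl(p \wedge \ext{\overline{\alpha}N = u}\bigr)$; each summand is dominated by $\ext{\overline{\alpha}m(u) = \overline{u}m(u)}$, which in turn is $\leq \ext{\overline{\alpha}m(u) \in B}$ because $\overline{u}m(u) \in \tilde B$ gives $\ext{\overline{u}m(u) \in B} = \top$. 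Joining over $u$ yields $p \leq \bigvee_{m \leq N} \ext{\overline{\alpha}m \in B}$, i.e.\ $p \Vdash \exists m \leq N\,\overline{\alpha}m \in B$, as required.

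The hard part is the extraction step: it is the only essential use of connectedness, and it is what collapses the decidable values $\ext{(u,n) \in S}$ to ground-model data. This is also why the same argument does not reach $\FANF$: for an arbitrary bar there is no decidable presentation, so $\ext{u \in B}$ need not be complemented, and the reduction to a ground-model bar breaks down. That obstruction is precisely what motivates abandoning Heyting-valued models in favour of the Kripke-model constructions developed in the sequel.
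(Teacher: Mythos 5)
Your proof is correct and takes essentially the same route as the paper's: use connectedness to force each $\ext{(u,n) \in S}$ into $\{\top,\bot\}$, extract the ground-model bar $\tilde B$, apply the classical Fan Theorem, and transfer the resulting bound $N$ back into $V^H$. You merely spell out the final transfer step (which the paper dismisses as ``easy to see'') in explicit algebraic detail.
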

\begin{proof}
Suppose $H \Vdash ``B$ is a $\Pi^0_1$-bar, given say by $S: u \in
B$ iff $\forall n \in \NN \; (u,n) \in S."$ Since $H \Vdash ``S$
is decidable," for any $u \in 2^*$ and $n \in \NN,$

\[ H \Vdash ``(u,n) \in S \vee (u,n) \not \in S." \]
By the connectedness of $H$ either $H \Vdash ``(u,n) \in S"$ or $H
\Vdash ``(u,n) \not \in S."$ So define a set $\tilde{B} \subset
2^{\ast}$ in the metatheory by
\[ u \in \tilde{B} \iff \forall n \in \NN \; H \Vdash ``(u ,n)\in
S."\] $\tilde{B}$ is itself a bar, as follows. Let $\alpha \in
2^{\NN}$ be arbitrary. If $\overline{\alpha} n \notin \tilde{B}$
for all $n \in \NN$ then for all $n$ there exist $i_{n}$ such that
$ \llbracket (\overline{\alpha}n ,i_{n})\in B \rrbracket = \bot$.
Thus
\[  \llbracket \forall m \in \NN \; (\overline{\alpha}n ,m)\in B
\rrbracket = \bot\] for any $n \in \NN$, and therefore
\[ \llbracket \exists n \in \NN \forall m \in \NN \;
(\overline{\alpha}n ,m)\in B \rrbracket = \bot  \ ;\] a
contradiction to $B$ being a bar internally. Hence $\tilde{B}$ is
a bar externally, and therefore, working with a classical
metatheory (or simply the Fan Theorem), it is uniform. So there
exists $N$ such that for all $u$ of length $N$ some initial
segment of $u$ is in $\tilde{B}$. Then it is easy to see, that
this same $N$ witnesses the uniformity of $B$ internally.
\end{proof}

\begin{Cor}
\FANP does not imply \FANF (over IZF).
\end{Cor}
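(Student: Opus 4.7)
The plan is to combine the two results just established in this section. Take $\Omega = K([0,1] \times [0,1])$, the coperfect open sets of the unit square, which was noted to be a connected Heyting algebra. By the preceding proposition, the $\Omega$-valued model satisfies $\FANP$. On the other hand, this is exactly the model constructed by Fourman and Hyland in \cite{mF79}, section 4, where they show that $\FANF$ fails — indeed, that was the motivation for switching from ordinary open sets (where the first proposition of this section rules out any failure) to coperfect ones: passing to coperfect sets is precisely what lets one ``delete points'' from an open cover while preserving its value, and thereby build an internal bar that is not uniform. Since the $\Omega$-valued interpretation is a model of $\mathrm{IZF}$, we obtain a model in which $\FANP$ holds and $\FANF$ fails, proving the non-implication over $\mathrm{IZF}$.

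Concretely, the steps I would carry out in order are: (i) recall that $\Omega = K([0,1]\times[0,1])$ is a complete Heyting algebra, so the associated Heyting-valued model interprets $\mathrm{IZF}$; (ii) invoke connectedness of $\Omega$ together with the preceding proposition to conclude $\Omega \Vdash \FANP$; (iii) cite (or briefly reproduce) the Fourman-Hyland construction of a bar witnessing $\Omega \not\Vdash \FANF$; (iv) conclude that $\mathrm{IZF} + \FANP \not\vdash \FANF$.

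The only step with any real content is (iii), and it is not original to this paper — it is the content of \cite{mF79}. The point of $K([0,1]\times[0,1])$ (rather than, say, $K([0,1])$) is that one needs enough ``room'' in the space so that for each finite binary sequence $u$ one can assign a coperfect open set in a way that simultaneously makes the relevant set an internal bar but prevents uniformity; the second dimension supplies the parameter indexing branches. Steps (i) and (ii) are essentially bookkeeping given what has already been proved, so the corollary reduces to a direct application of the two propositions together with the pre-existing Fourman-Hyland failure of $\FANF$, with no further construction required.
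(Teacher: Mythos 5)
Your argument is exactly the paper's: combine Proposition 2 (connected Heyting algebras force \FANP) with the fact that $\Omega = K([0,1]\times[0,1])$ is connected, and cite Fourman--Hyland for $\Omega \not\Vdash \FANF$. The paper's proof is a one-liner to that effect; you have simply spelled out the same bookkeeping in more detail.
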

\begin{proof}
In \cite{mF79} it shown that $ \Omega \not \Vdash \FANF$.
\end{proof}

\section {\FAND is not Provable}
As discussed in the introduction, recursive realizability shows
that IZF does not prove \FAND. However, we do not see how to adapt
that, or the Heyting-valued model from the previous section, to
the other desired separation results. Hence we are hoping not
merely to provide here a different model falsifying \FAND as a
technical exercise, but rather to provide a technique more
flexible than those referenced, to produce the other separation
results. Of course, if this really is a flexible technique, it
should work for the known separations too.

We will build a Kripke model, working within ZFC. To construct a
bar, it will be crucial to control what paths exist. This is most
easily done with a generic set, in the sense of forcing.

\begin {Def} Let the forcing partial order {\it P} be
the set of appropriate labelings of finitely many nodes from
2$^*$. A labeling of nodes assigns to each one either IN, OUT, or
$\infty$, with the following restrictions. Any node labeled IN has
no descendant, the idea being that once a node gets into the
eventual bar so are all of its descendants automatically, so
nothing more need be said. Any descendant of a node labeled OUT
must be labeled IN or OUT. Finally, for any node labeled $\infty$,
if both children are labeled, then at least one of them must be
labeled $\infty$. \end {Def}

Let $G$ be a generic through the condition that labels $\langle
\rangle$ with $\infty$. By straightforward density arguments, any
node labeled OUT by $G$ has a uniform bar above it (or below it,
depending on how you draw your trees) all labeled IN, and every
node labeled $\infty$ has a path through it always labeled
$\infty$, in fact a perfect set of such.

Let $B = \{ \alpha \in 2^* |$ for some $n \; G(\alpha
\upharpoonright n)$ = IN\}. $B \in M[G]$ is the interpretation
$\sigma_B^G$ of the term $\sigma_B = \{ \langle p, \hat{\alpha}
\rangle \mid $ for some $n \; p(\alpha \upharpoonright n)$ = IN\}.
(As usual, the function $\hat{^.}$ is the canonical injection of
the ground model into the terms: $\hat{x} = \{ \langle \emptyset,
\hat{y} \rangle \mid y \in x \}$.) Because of these latter
$\infty$-paths, $B$ is not a bar. However, we might reasonably
think that if we no longer had access to the distinction between
the OUT and the $\infty$ nodes, we might no longer be able to
build a path avoiding $B$. This intuition is confirmed by the next
proposition.

\begin {Def} The shadow forcing $Q$ is the set of functions from
finite subtrees of $2^*$ to \{IN, OUT\} such that any node labeled
IN has no descendant. Equivalently, $Q$ is the sub-partial order
of $P$ beneath the condition labeling $\langle \rangle$ with OUT
(together with the condition which labels $\langle \rangle$ IN,
which has no extension). The canonical projection proj$_Q$ of $P$
onto $Q$ replaces all occurrences of $\infty$ with OUT. The
canonical projection of the terms of $P$'s forcing language to
those of $Q$'s, ambiguously also called proj$_Q$, acts by applying
proj$_Q$ to the conditions that appear in the terms, hereditarily.
(Notice that $Q$ term are also $P$ terms.)
\end {Def}

Notice that a $P$-filter projects to a $Q$-filter. If $G$ is a
generic $P$-filter, then proj$_Q(G)$ will not be $Q$-generic,
because in $Q$ the terminal conditions are dense. Still,
proj$_Q(G)$ induces an interpretation $\sigma^{proj_Q(G)}$ of the
terms $\sigma$ of $Q$. These interpretations are in $M[G]$, as
they are easily definable from $\sigma$ and $G$; alternatively,
$\sigma^{proj_Q(G)} = (proj_Q^{-1 \prime \prime}\sigma)^G.$

For any $P$-filter $G, proj_Q(\sigma_B)^{proj_Q(G)} = B$: the
induced interpretation of the projection of $B$ is just $B$
itself. Effectively, $B$ as a $P$-term is already a $Q$-term.

\begin {Pro} If $\sigma$ is a $Q$-term and $p \Vdash_P
``proj_Q^{-1\prime\prime}\sigma$ is an infinite branch through
$2^*$," then $p \Vdash_P ``proj_Q^{-1\prime\prime}\sigma$ goes
through $\sigma_B$."
\end {Pro}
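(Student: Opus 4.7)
The plan is to argue by contradiction: extend the non-generic filter $proj_Q(G)$ to a $Q$-generic filter over $M[G]$ and exploit the density of terminal $Q$-conditions.

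Assume some $q \leq p$ forces $proj_Q^{-1\prime\prime}\sigma$ to avoid $\sigma_B$, and pick a $P$-generic $G$ through $q$, so that $\beta := \sigma^{proj_Q(G)}$ is an infinite branch in $M[G]$ avoiding $B$. For each $n$, the initial segment $\overline{\beta}n$ lies in $2^*$ and so by standard $P$-density is labeled by $G$; it cannot have an IN-ancestor (else $\beta \in B$). Therefore $\overline{\beta}n$ is labeled either OUT or $\infty$ in $G$. The density fact recalled above --- every OUT-node of $G$ has a uniform IN-bar of descendants --- rules out the OUT case, for $\beta$ would then meet $B$. So each $\overline{\beta}n$ is labeled $\infty$ in $G$, and hence is labeled OUT in $proj_Q(G)$.

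Now extend $proj_Q(G)$, a $Q$-filter in $M[G]$, to a $Q$-generic filter $G^*$ over $M[G]$; this is available classically. Since terminal conditions are dense in $Q$, $G^*$ contains some terminal $t$, whose IN-labeled nodes $\{v_1, \ldots, v_k\}$ form a finite bar of $2^{\mathbb{N}}$: every $\alpha \in 2^{\mathbb{N}}$ satisfies $\overline{\alpha}|v_i| = v_i$ for some $i$. This is an absolute property (a finitary check on $\alpha$'s first $\max_i |v_i|$ bits), so it persists to $M[G][G^*]$.

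Applied to the underlying path $\alpha \in 2^{\mathbb{N}} \cap M[G]$ of $\beta$, one has $\overline{\beta}|v_i| = v_i$ for some $i$, which is labeled IN by $t \in G^*$. But $proj_Q(G) \subseteq G^*$ already labels this node as OUT, and $G^*$ is a filter, so cannot contain two conditions with conflicting IN/OUT labels on the same node --- the desired contradiction. The key ingredients are the density analysis for $P$ that pins $\beta$ to an $\infty$-path, and the absoluteness of the terminal-condition bar property; the existence of the $Q$-generic over $M[G]$ is a routine meta-theoretic construction.
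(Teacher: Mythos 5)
Your proposal has a fatal gap at the step where you ``extend $proj_Q(G)$, a $Q$-filter in $M[G]$, to a $Q$-generic filter $G^*$ over $M[G]$.'' This extension is impossible. Since $G$ is $P$-generic through the condition labeling $\langle\rangle$ with $\infty$, $G$ contains an $\infty$-path $\alpha \in 2^{\mathbb{N}}$, and $proj_Q(G)$ therefore labels every initial segment of $\alpha$ as OUT; more precisely, for every $m$ there is a condition in $proj_Q(G)$ whose domain includes $\overline{\alpha}m$ and labels it OUT. Any non-trivial terminal $Q$-condition $t$ labels a finite IN-bar that intersects every branch of $2^{\mathbb{N}}$, including $\alpha$. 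So $t$ labels some $\overline{\alpha}m$ as IN, while some condition already in $proj_Q(G)$ labels that same node OUT: $t$ is incompatible with $proj_Q(G)$. Since this holds for \emph{every} terminal condition and the terminal conditions are dense, $proj_Q(G)$ cannot be extended to any $Q$-generic over $M[G]$ (or over any model). The paper's remark that ``$proj_Q(G)$ will not be $Q$-generic, because in $Q$ the terminal conditions are dense'' is pointing at exactly this obstruction.

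A symptom of the problem is that the contradiction you reach in the final paragraph --- $G^*$ containing two conditions with conflicting labels on one node --- does not actually use the hypothesis that $q$ forces $proj_Q^{-1\prime\prime}\sigma$ to avoid $\sigma_B$, nor that $\sigma$ is forced to be a branch. It is purely a contradiction in the postulated existence of $G^*$, so even if everything else went through, the reductio would not refute the assumption you intended to refute. The paper avoids this entirely by arguing directly: it extends $p$ in several careful stages to a condition $q$ that pins down $\sigma(\hat k)$ for a suitable $k$, arranges the level-$k$ $\infty$-nodes to be pairwise distinguishable, and then compares $q$ with the modified condition $q_{alt}$ (same $Q$-projection, but with the relevant $\infty$s changed to OUTs) to show that every extension of $q_{alt}$ forces $proj_Q^{-1\prime\prime}\sigma$ to hit $\sigma_B$. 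That direct construction of a forcing extension below $p$ is the content you would need to supply; the detour through a $Q$-generic over $M[G]$ cannot be salvaged.
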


\begin {proof} By standard forcing technology, it suffices to
extend $p$ to some condition forcing
$``proj_Q^{-1\prime\prime}\sigma$ goes through $\sigma_B$," as
then it will be dense beneath $p$ to force as much, and so will
happen generically.

First extend $p$ so that every sequence in 2$^*$ of length
2$^{n-1}$ for some $n$ either is labeled OUT or $\infty$ or has a
proper initial segment labeled IN. Then extend again by adjoining
both children to all nodes of length 2$^{n-1}$, and labeling them
$\infty$ whenever possible (otherwise IN or OUT). For a technical
reason soon to become clear, we must extend yet again. This time
have the domain include all length $k$ descendants of the length
$n$ nodes not labeled IN, and label them so that every length $n$
node labeled $\infty$ has a unique descendant of length $k$
labeled $\infty$, and, most importantly, for each pair of nodes
$\alpha$ and $\beta$ of length $k$ labeled $\infty$, there is some
$i$ with $\alpha(i) = 1$ and $\beta(i) = 0.$ One way of doing this
is to let $s$ be the number of nodes of length $n$ labeled
$\infty$, to let $k$ be $n+s$, and to build the $\infty$-labeled
descendant of the $j^{th}$ such node by adjoining to it $j-1$ 0's,
a 1, and then $s-j$ 0's, all other descendants of length $k$ being
labeled OUT.

Extend one last time to $q \Vdash
proj_Q^{-1\prime\prime}\sigma(\hat{k}) = \hat{\alpha}$ for some
fixed $\alpha$, where as usual $\hat x$ is the standard term for
the internalization of the set $x$. Moreover, $q$ should force the
equality in the strong sense that for each $j < k$ there is a term
$\tau$ and a condition $r \geq q$ with $\langle r, \tau \rangle
\in proj_Q^{-1\prime\prime}\sigma$ and $q \Vdash \tau = \langle
\hat j, \hat \alpha(\hat j) \rangle$; even further, if $\alpha(j)
= 1$ then $q$ forces a particular element to be in $\tau$'s second
component.

If $q$ labels some initial segment of $\alpha$ IN then we're done.

If $q$ labels $\alpha$ OUT then it is dense beneath $q$ that all
descendants of $\alpha$ of some fixed length are labeled IN, and
again we're done.

If $q$ labels $\alpha \; \infty$ then let $q_{alt}$ be identical
to $q$ except that all descendants of $\alpha \upharpoonright n$
labeled $\infty$ by $q$ are labeled OUT by $q_{alt}$. Observe
first that $q_{alt}$ extends $p$. Then note that, because
$proj_Q(q_{alt}) = proj_Q(q)$, the strong forcing facts posited of
$q$ hold for $q_{alt}$ as well: for the same $\tau$ and $j$ as
above, $q_{alt} \Vdash \tau \in proj_Q^{-1\prime\prime}\sigma$ and
$q_{alt} \Vdash ``\tau$ is an ordered pair with first component
$\hat j$," and if $q$ forced $\tau$'s second component to be
non-empty, $q_{alt}$ also forces it to be non-empty, containing
the same term as for $q$. The difference between $q$ and
$q_{alt}$, from $\sigma$'s point of view, is that $q_{alt}$ has
more extensions than $q$: there are conditions extending $q_{alt}$
which bar the tree beneath $\alpha$, which is not so for $q$. That
means that it is possible for extensions of $q_{alt}$ to force
sets into $Q$-terms that no extension of $q$ could. In the case of
$proj_Q^{-1\prime\prime}\sigma(\hat k)$, though, such
opportunities are limited. That term is already forced by $p$ to
be a function with domain $k$; for each $j < k$ there is already a
fixed term forced to stand for $\langle j,
(proj_Q^{-1\prime\prime}\sigma(\hat k))(j) \rangle$; if that
function value at $j$ was forced by $q$ to be 1 then it must
retain a member and so is also forced by $q_{alt}$ to be 1. The
only change possible is that something formerly forced to be empty
(i.e. be 0) could now be forced by some extension to have an
element (i.e. be 1). Recall, though, the construction of $q$ on
level $k$: if $proj_Q^{-1\prime\prime}\sigma(\hat k)$ is ever
forced by some $r \leq q_{alt}$ to be some $\beta \not = \alpha$
by flipping some 0's to 1's, by $\alpha$'s distinguished 1 $r$
cannot label $\beta$ $\infty$. So $r$ can be extended so that all
extensions of $\beta$ of a certain length are labeled IN, forcing
$proj_Q^{-1\prime\prime}\sigma$ to hit $\sigma_B$. Of course, any
extension of $q_{alt}$ forcing $proj_Q^{-1\prime\prime}\sigma(\hat
k)$ to be $\alpha$ works the same way as such an $r$ does, since
$q_{alt}$ already labels $\alpha$ OUT. In either case we have an
extension of $p$ forcing $proj_Q^{-1\prime\prime}\sigma$ go
through $\sigma_B$.
\end {proof}

Even though we have just seen that $B$ is a bar relative to the
$Q$-paths, we will perhaps surprisingly have occasion to consider
weaker situations, where $B$ is larger and hence even easier to
hit. The case of interest is if we were to change some $\infty$'s
in $G$ to OUTs, thereby allowing uniform bars above those nodes.
Notice that if $\alpha$'s sibling is not labeled $\infty$, then
$\alpha$'s label could not consistently be changed from $\infty$,
as then $\alpha$'s parent, labeled $\infty$, would then have both
children not labeled $\infty$. Such considerations do not apply
when $\alpha = \langle \rangle$.

\begin {Def} $H$ is a legal weakening of $G$ if $H$ can be
constructed by choosing finitely many nodes labeled $\infty$ by
$G$, changing those labels (to either IN or OUT), also changing
the labeling of finitely many descendants of those nodes from
$\infty$ or OUT to OUT or IN in such a way that each node labeled
OUT has a uniform bar above it labeled IN, and then eliminating
all descendants of nodes labeled IN. Furthermore, this must be
done in such a manner that $H$ is a filter through $P$ (avoiding,
for instance, the problem posed just before this definition).
\end {Def}

Notice that the difference between $H$ and $G$ can be summarized
in one condition $p$, which contains the new bars, all labeled IN,
and all of their ancestors. Hence we use the notation $G_p$ to
stand for this $H$: to build $G_p$, make the minimal change to
each condition in $G$ in order to be consistent with $p$.

\begin {Lem}
If $G_p$ is a legal weakening of $G$ then $G_p$ is generic through
$p$. \end {Lem}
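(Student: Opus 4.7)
The plan is to reduce $P$-genericity of $G_p$ through $p$ to the assumed $P$-genericity of $G$ through the root condition $p_0$ (the condition labeling $\langle\rangle$ by $\infty$). Since $p \in G_p$ by construction and $G_p$ is a filter, the only thing to check is that $G_p \cap D$ is non-empty for every dense $D \subseteq P$ below $p$.

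Given such a $D$, the natural pull-back to consider is
\[
  D^{\ast} \;=\; \{\,q \in P : q \leq p_0 \text{ and } q_p \text{ extends some element of } D\,\},
\]
where $q_p$ is the minimal modification of $q$ consistent with $p$, in exactly the sense used to build $G_p$ from $G$. If $G$ meets $D^\ast$, say at $q$, then $q_p \in G_p$ sits below some $r \in D$, so $r \in G_p$ by upward closure. Hence the problem reduces to showing $D^\ast$ meets $G$, and it suffices (by genericity of $G$) to show that $D^\ast$ is dense below every $q \leq p_0$ that is compatible with $p$; all elements of $G$ have this property by the standing hypothesis that $G_p$ is a legal weakening.

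To prove density, fix such a $q \leq p_0$. First extend $q$ to $q^+ \leq q$ whose domain contains every node $p$ relabels from $\infty$, each carrying $\infty$ in $q^+$, together with the $\infty$-sibling witnesses these nodes require; also arrange that $q^+$ labels nothing strictly below $p$'s IN-bars. Then $q^+_p$ is well-defined: replace those $\infty$-labels by $p$'s IN/OUT labels, insert $p$'s uniform IN-bars along with the intermediate OUT-cascade, and discard anything $q^+$ might have had below these IN-bars. This $q^+_p$ is a condition extending $p$, so by density of $D$ we may pick $r \leq q^+_p$ with $r \in D$. Now reverse the modification to get $q'$: restore $\infty$ at each node $p$ relabeled, delete from the domain all of $p$'s IN-bar nodes and the intermediate OUTs between them and the restored ancestors, and keep $r$'s labels elsewhere. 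By design $q' \leq q^+ \leq q$ and $q'_p = r$, so $q' \in D^\ast$.

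The main obstacle, and essentially the only real content, is checking that this $q'$ is a legitimate $P$-condition. The awkward clause is the $\infty$-sibling rule at the restored ancestors; this is precisely what the absorption into $q^+$ arranged, since the needed $\infty$-sibling is present in $q^+$ and hence persists in $r$ and in $q'$. The OUT-cascade rule is preserved because the only nodes we delete form complete subtrees below $p$'s modifications, leaving no OUT-ancestor in $q'$ dangling toward an absent descendant; and the IN-rule for $q'$ is automatic because any node below $p$'s IN-bar was already barred from the domain of $r$ by $p$'s own IN-constraint. With these three verifications, $D^\ast$ is dense below $q$, which completes the reduction and yields the lemma.
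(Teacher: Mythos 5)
Your overall strategy coincides with the paper's: pull $D$ back along the ``minimal modification'' map $q \mapsto q_p$ to a set $D^\ast$, show $D^\ast$ is dense below a suitable anchor condition, let genericity of $G$ produce a member of $D^\ast \cap G$, and push forward into $G_p$. However, there is a genuine error in the reduction step, and it propagates into a mismatch with your own density argument.

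You assert that ``all elements of $G$ have this property [compatibility with $p$] by the standing hypothesis that $G_p$ is a legal weakening.'' This is false. By definition, $p$ records precisely the nodes at which $G$'s $\infty$-labels have been \emph{changed} to IN or OUT (together with the new IN-bars and their ancestors). Consequently $G \upharpoonright dom(p)$, which is a condition in $G$, disagrees with $p$ at exactly those relabeled nodes and therefore has \emph{no} common extension with $p$. In fact, if $G$ contained even one nontrivial condition compatible with $p$, then density of $D^\ast$ below such a condition would be essentially trivial (just pick an element of $D$ below a common extension of that condition and $p$); the very fact that your density argument requires the elaborate ``extend with $\infty$, modify, reverse'' maneuver is a signal that the anchor condition is \emph{not} compatible with $p$. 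What your argument actually needs is $q$ compatible with $G \upharpoonright dom(p)$ — so that the nodes $p$ relabeled are, in $q$, still labeled $\infty$ or absent — and this is exactly what the paper secures by proving $D^\ast$ (their $\mathrm{proj}_p^{-1\prime\prime}D$) is dense \emph{below} $G \upharpoonright dom(p)$, which is in $G$, rather than below arbitrary conditions compatible with $p$. Replace ``compatible with $p$'' throughout by ``$\leq G \upharpoonright dom(p)$'' (which is the right anchor in $G$), drop the claim about all of $G$, and also note that your requirement to ``arrange that $q^+$ labels nothing strictly below $p$'s IN-bars'' is in general unachievable for a fixed $q$ — but it is also unnecessary, since the modification map simply discards such labels; with these repairs, the rest of your verification follows the paper's $q \mapsto q_r$ round-trip closely.
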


\begin {Rmk} Notice that if $p$ labels the empty sequence IN or
OUT then $p = G_p$ is a terminal condition in $P$, trivially
satisfying the lemma.
\end {Rmk}

\begin {proof}
Let $D$ be dense beneath $p$. Notice that $G \upharpoonright
dom(p)$ is a condition in $P$ contained in $G$. It is not hard to
define the notion of projection beneath $p$, $proj_p$, by making
the minimal changes in a condition necessary to be compatible with
$p$. We claim that $proj_p^{-1\prime\prime}D$ is dense beneath $G
\upharpoonright dom(p)$. To see this, let $q \leq G
\upharpoonright dom(p)$. Extend $proj_p(q)$ to $r \in D$. The only
way $r$ can extend $proj_p(q)$ is by labeling extensions $\alpha$
of nodes which are unchanged by $proj_p$: if $\alpha \in dom(r)
\backslash dom(proj_p(q))$ then, for $\alpha \upharpoonright n \in
dom(q), \; q(\alpha \upharpoonright n) = proj_p(q)(\alpha
\upharpoonright n)$. Extend $q$ to $q_r$ by labeling those same
extensions the same way: for $\alpha \in dom(r) \backslash
dom(proj_p(q)) \; q_r(\alpha) = r(\alpha)$. We have that
$proj_p(q_r) = r$, hence $q_r \in proj_p^{-1\prime\prime}D$. So
$proj_p^{-1\prime\prime}D$ is dense beneath $G \upharpoonright
dom(p)$, hence contains a member of $G$, say $q$. Then $proj_p(q)$
is in both $D$ and $G_p$.
\end {proof}

We can now describe the ultimate Kripke model. Recall that $G$ is
generic for $P$ over $M$ and labels the empty sequence with
$\infty$. The bottom node $\bot$ of the Kripke model consists of
the $Q$-terms, with membership (not equality!) as interpreted by
$proj_Q(G)$. Let $N$ be an ultrapower of $M[G]$ using any
non-principal ultrafilter on $\omega$, with elementary embedding
$f:M[G] \rightarrow N$. This necessarily produces non-standard
integers. Let $\mathcal{H}$ be the set of legal weakenings of
$f(G)$, as defined in $N$, which induce the same $B$ on the
standard levels of 2$^*$, which restriction is definable only in
$M[G]$. That is, any standard node labeled $\infty$ by $G$ can
only be changed to OUT by the legal weakening. $\mathcal{H}$ will
index the successors of $\bot$. At the node indexed by $f(G)_p$,
the universe will be the $Q$-terms of $N$ as interpreted by
$proj_Q(f(G)_p)$. Regarding the embeddings from $\bot$, for a
$Q$-term $\sigma \in M$, $f(\sigma)$ is an $f(Q)$-term in $N$, so
send $\sigma$ to $f(\sigma)$. If $f(G)_p$ is a terminal condition
in $P$, then the node indexed by $f(G)_p$ is terminal in the
Kripke ordering. Else iterate. That is, suppose $f(G)_p$ is
non-terminal. The structure at its node can be built in $N$. As an
ultrapower of $M[G]$, $N$ internally looks like $f(M)[f(G)]$;
internally, $f(G)$ is $f(P)$-generic over the ground model $f(M)$.
The structure at node $f(G)_p$ could be built in $f(M)[f(G)_p]$,
where, by the previous lemma, $f(G)_p$ is generic through $f(P),$
and also non-terminal. Hence the construction just described,
using an ultrapower and legal weakenings to get additional nodes,
can be performed in $f(M)[f(G)_p]$ just as above. Continue through
$\omega$-many levels. We will ambiguously use $f$ to stand for any
of the elementary embeddings, including compositions of such
(making $f$ a sort-of polymorphic transition function). Notice
that the construction relativizes: the Kripke structure from node
$f(G)_p$ onwards is definable in $f(M)[f(G)_p]$ just as the entire
structure is definable in $M[G]$.

This defines a Kripke structure interpreting membership. Equality
at any node can now be defined as extensional equality beyond that
node in this structure, inductively on the ranks of the terms,
even though the model is not well-founded, thanks to the
elementarity present. That is, working at $\bot$, suppose $\sigma$
and $\tau$ are terms of rank at most $\alpha$, and we have defined
equality at $\bot$ for all terms of rank less than $\alpha$.
Moreover, suppose (strengthening the inductive assumption here)
that this definability was forced in $M$ by the empty condition
$\emptyset.$ At node $f(G)_p$ the structure is definable over
$f(M)[f(G)_p]$, and, by elementarity, in $f(M)$, $\emptyset \Vdash
``$Equality in the Kripke model is unambiguously definable for all
terms of rank less that $f(\alpha)$." So at that node we can see
whether there is a witness to $f(\sigma)$ and $f(\tau)$ being
unequal. If there is such a witness at any node $f(G)_p$, then
$\sigma$ and $\tau$ are unequal at $\bot$, else they are equal at
$\bot$. This extends the definability of equality to all terms of
rank $\alpha$. Hence inductively equality is definable for all
terms.

\begin {Pro}
$\bot \not \Vdash \FAND.$ \end {Pro}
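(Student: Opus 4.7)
The plan is to interpret $\sigma_{B}$ at $\bot$ as a set $B$ (so $u \in B$ iff some initial segment of $u$ is labelled IN by the underlying filter) and to show that $\bot$ forces $B$ to be a decidable bar that fails to be uniform, so that the implication ``decidable bar $\Rightarrow$ uniform'' already breaks at $\bot$.

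For decidability, at every Kripke node the filter decides the label of each node of $2^{\ast}$, and members of $\mathcal{H}$ are constrained to preserve $B$'s value on the standard levels. So $u \in B$ is decided at each node and the answer persists upward, yielding $\bot \Vdash \forall u \in 2^{\ast}\,(u \in B \vee u \notin B)$. For the bar property, at $\bot$ this is the preceding Proposition applied to $G$. At a successor indexed by $f(G)_{p}$, the Lemma says $f(G)_{p}$ is $f(P)$-generic through $p$ over $f(M)$, and by elementarity of $f$ the image of that Proposition in $f(M)$ asserts that every $f(Q)$-named infinite branch in the successor's universe passes through $f(\sigma_{B})$.

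The main point is the failure of uniformity. Given any standard $M$, I exhibit at $\bot$ itself a branch $\alpha^{M} \in 2^{\NN}$ with $\overline{\alpha^{M}}m \notin B$ for all $m \le M$. The key density fact is that, since an $\infty$-node of $P$ is allowed to have an as-yet-unlabelled $\infty$-child, the set of conditions that extend $\langle \rangle \mapsto \infty$ by a chain of $M$ further $\infty$-labels is dense below every condition in $P$. Hence $G$ contains such an extension, so some length-$M$ binary string $\pi$ has all its initial segments labelled $\infty$ in $G$, in particular none of them labelled IN. Let $\alpha^{M}$ be the lexicographically leftmost such $\pi$, extended by $0$'s to an infinite sequence. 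Because its definition refers only to the IN-labels of $G$, which are visible after $proj_{Q}$, $\alpha^{M}$ is the interpretation of a $Q$-term in $M$ and therefore belongs to $\bot$'s universe. By construction $\overline{\alpha^{M}}m \notin B$ for every $m \le M$, refuting uniformity with witness $M$; as $M$ was arbitrary this gives $\bot \not\Vdash \exists N\,\forall \alpha\,\exists m \le N\,(\overline{\alpha}m \in B)$, so $\bot \not\Vdash \FAND$.

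The main obstacle is the density step producing $\alpha^{M}$: one has to verify that $P$'s structural rule for $\infty$-nodes really does permit arbitrary standard-length chains of fresh $\infty$-labels below every condition of $G$, and that the resulting leftmost IN-avoiding string can be pinned down purely in terms of IN/OUT information so that it genuinely comes from a $Q$-term. The persistence of the standard IN-structure under legal weakenings (which makes $\overline{\alpha^{M}}m \notin B$ stay true at every successor) together with the application of the preceding Proposition in $f(M)$ via the Lemma then close the remaining gaps.
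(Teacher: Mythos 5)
Your proof is correct and follows the paper's approach: decidability and the bar property are handled exactly as the paper does, via the preceding Lemma and Proposition together with elementarity of $f$, and non-uniformity is traced to the persistence of $\infty$-labels at every standard level, which you unpack by producing for each standard bound $M$ a $Q$-term path avoiding IN through level $M$ and noting that the constraint on $\mathcal{H}$ (legal weakenings cannot change a standard $\infty$ to IN) makes this persist at all later nodes. One small wobble worth flagging: the relevant path is the leftmost IN-avoiding one, as you correctly say at the end, and not the leftmost all-$\infty$ one as suggested midway, since $\mathrm{proj}_Q$ collapses $\infty$ and OUT so only the IN/non-IN distinction is visible from a $Q$-term; the all-$\infty$ chain you obtain by density is only needed to show the IN-avoiding set at level $M$ is non-empty.
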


\begin {proof}
It is immediate that $B$ is a bar: any node is internally of the
form $f(M)[f(G)_p]$; by the lemma, $f(G)_p$ is always
$f(P)$-generic; by the proposition, no path given by a $Q$-term
can avoid the interpretation of the term for $B$ as given by an
$f(P)$-generic. Moreover, $B$ is decidable, as $f(G)_p$ agrees
with $G$ on the standard part of 2$^*$, the only part that exists
at $\bot$, and that argument relativizes to all nodes. However,
$B$ is not uniform at any non-terminal node, since $f(G)_p$, when
non-terminal, has labels of $\infty$ at every level. \end {proof}

What remains to show is that our model satisfies IZF. In order to
do this, we will need to get a handle on internal truth in the
model. This is actually unnecessary for most of the IZF axioms,
but for Separation in particular we will have to deal with truth
in the model. When forcing, this is done via the forcing and truth
lemmas: $M[G] \models \phi$ iff $p \Vdash \phi$ for some $p \in
G,$ where $\Vdash$ is definable in $M$. Since our Kripke model is
built in $M[G]$, statements about it are statements within $M[G]$,
and so are forced by conditions in $G$. The problem is that the
Kripke model internally does not have access to $G$, but only to
$B$. In detail, Separation for $M[G]$ is proven as follows: given
$\phi$ and $\sigma$, it suffices to consider $\{ \langle q, \tau
\rangle \mid$ for some $\langle p, \tau \rangle \in \sigma, \; q
\leq p$ and $q \Vdash \phi(\tau) \}$. The problem we face is that
that set seems not to be in the Kripke model, even if $\sigma$ is.
What we need to show is that if $\sigma$ and $\phi$'s parameters
are $Q$-terms then that separating set is given by a $Q$-term.

Recall that proj$_Q$ operates by replacing all occurrences of
$\infty$ by OUT.

\begin {Def} $p \sim p'$ if proj$_Q(p)$ = proj$_Q(p')$.
\end {Def}

\begin {Def} $p \Vdash^* \phi$, for $\phi$ in the language of the
Kripke model, i.e. when $\phi$'s parameters are $Q$-terms,
inductively on $\phi:$
\begin {itemize}
\item $p \Vdash^* \sigma \in \tau$ if, for some
$\langle q, \rho \rangle \in \tau, \; q \geq_Q proj_Q(p)$ and $p
\Vdash^* \sigma = \rho.$
\item $p \Vdash^* \sigma = \tau$ if for all $p' \sim p$, $p'' \leq_P p'$, and
$\langle q, \rho \rangle \in \sigma,$ if $proj_Q(p'') \leq_Q q$
then there is a $p''' \leq_P p''$ such that $p''' \Vdash^* \rho
\in \tau$, and symmetrically.
\item $p \Vdash^* \phi \wedge \theta$ if $p \Vdash^* \phi$ and $p \Vdash^*
\theta$.
\item $p \Vdash^* \phi \vee \theta$ if $p \Vdash^* \phi$ or $p \Vdash^*
\theta$.
\item $p \Vdash^* \phi \rightarrow \theta$ if for all for all $p' \sim p$ and
$p'' \leq_P p'$,  if $p'' \Vdash^* \phi$ then there is a $p''' \leq_P p''$ such
that $p''' \Vdash^* \theta.$
\item $p \Vdash^* \exists x \; \phi(x)$ if, for some $Q$-term
$\sigma, \; p \Vdash^* \phi(\sigma)$.
\item $p \Vdash^* \forall x \; \phi(x)$ if for all for all $p' \sim p$, $p'' \leq_P p'$,
and $Q$-term $\sigma$, there is a $p''' \leq_P p''$ such that
$p''' \Vdash^* \phi(\sigma).$
\end {itemize}
\end {Def}
\begin {Lem} \label {equivalence}
If $p \sim p'$ then $p \Vdash^* \phi$ iff $p' \Vdash^* \phi.$
\end {Lem}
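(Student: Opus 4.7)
The plan is to proceed by induction on the formula $\phi$, with atomic formulas handled by a prior induction on the ranks of the $Q$-terms involved (since $p \Vdash^* \sigma = \tau$ and $p \Vdash^* \sigma \in \tau$ are defined by mutual recursion on term rank, not formula complexity). The key observation is that almost every clause in the definition of $\Vdash^*$ has already been \emph{designed} to be $\sim$-invariant, so most cases require no work at all beyond unravelling definitions.

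More concretely, I would proceed as follows. For the atomic case $p \Vdash^* \sigma \in \tau$: the definition refers to $p$ only through $\mathrm{proj}_Q(p)$, so if $p \sim p'$ then $\mathrm{proj}_Q(p) = \mathrm{proj}_Q(p')$ and the defining condition holds for $p$ iff it holds for $p'$; here one uses the rank-induction hypothesis to transfer $p \Vdash^* \sigma = \rho$ to $p' \Vdash^* \sigma = \rho$. For $p \Vdash^* \sigma = \tau$: the clause quantifies over all $p'' \sim p$, and the $\sim$-equivalence class of $p$ equals that of $p'$, so the two statements are literally the same. The same observation handles $\rightarrow$ and $\forall$, whose defining clauses both begin with ``for all $p' \sim p$.'' For $\wedge$, $\vee$, and $\exists$ the statement is immediate from the inductive hypothesis applied to the proper subformulas (or to $\phi(\sigma)$ in the case of $\exists$), since none of these clauses mentions $p$ except by reference to $\Vdash^*$ on a simpler formula at the same $p$.

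The main thing to be careful about is the structure of the induction. Because the $\in$-clause invokes $\Vdash^*$ for an equality statement $\sigma = \rho$ on terms of no smaller rank and possibly of the same rank as $\sigma, \tau$, one cannot simply induct on formula complexity; the standard fix is to do a simultaneous induction on the maximum of the ranks of the terms appearing in the atomic formula (with the equality clause referring to strictly smaller rank terms on the right of $\in$, and the $\in$ clause referring to strictly smaller rank equality, as in the usual forcing definition), and then a separate outer induction on formula complexity. A second minor point: in the equality clause one must verify that the inductive passage from $p''' \Vdash^* \rho \in \tau$ at some $p''' \leq_P p''$ transports correctly, but this is automatic because no change in $p$ is being made — the quantifier $\forall p' \sim p$ already ranges over the $\sim$-class of $p'$ too. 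Overall, this lemma is essentially a bookkeeping verification that the clauses were written down correctly; no surprises should arise.
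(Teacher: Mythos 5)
Your proof is correct and takes essentially the same route as the paper's very short one: the clauses for $=$, $\rightarrow$, and $\forall$ were built to quantify over the entire $\sim$-class of $p$, so $\sim$-invariance there is immediate from the definition, and the cases $\wedge$, $\vee$, $\exists$ are a trivial induction. Two small remarks on your handling of $\in$: first, you do not actually need a rank-induction hypothesis to pass from $p \Vdash^* \sigma = \rho$ to $p' \Vdash^* \sigma = \rho$, because the $=$ clause itself ranges over all conditions $\sim$-equivalent to $p$ and is therefore $\sim$-invariant outright, with no appeal to rank at all; second, even if one did want to organize a rank induction across the atomic cases, the maximum of the two term ranks does not in general decrease under the mutual $\in$/$=$ recursion (the $\in$ clause only lowers the rank on the right, the $=$ clause only on the left), so one would need something like the sum of the ranks or the multiset order rather than the max.
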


\begin {proof}
For the cases $\in, =, \rightarrow,$ and $\forall$, that is built
right into the definition of $\Vdash^*$. The other cases are a
trivial induction.
\end {proof}

\begin {Lem} \label {extension}
If $q \leq_P p$ and $p \Vdash^* \phi$ then $q \Vdash^* \phi.$
\end {Lem}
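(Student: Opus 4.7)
The plan is to prove the lemma by induction on the structure of $\phi$, with simultaneous induction on the rank of the term parameters for the atomic clauses. The cases $\phi \wedge \theta$, $\phi \vee \theta$, and $\exists x\,\phi(x)$ are routine: the witness extracted from the assumption on $p$ serves equally well for $q$, and the inductive hypothesis finishes the argument. The case $\sigma \in \tau$ is almost as quick. Suppose $p \Vdash^{*} \sigma \in \tau$ because of some $\langle q_0, \rho\rangle \in \tau$ with $q_0 \geq_Q proj_Q(p)$ and $p \Vdash^{*} \sigma = \rho$. Since $q \leq_P p$ implies $proj_Q(q) \leq_Q proj_Q(p) \leq_Q q_0$, and since by induction $q \Vdash^{*} \sigma = \rho$, the same witness $\langle q_0, \rho\rangle$ shows $q \Vdash^{*} \sigma \in \tau$.

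The substantive work is in the clauses $\sigma = \tau$, $\phi \rightarrow \theta$, and $\forall x\,\phi(x)$, all of which universally quantify over pairs $p' \sim p$ and $p'' \leq_P p'$. I would handle the three together by isolating a single sublemma: whenever $q \leq_P p$ and $q'' \leq_P q' \sim q$, the restriction $p' := q'' \upharpoonright dom(p)$ is a valid $P$-condition with $p' \sim p$ and $q'' \leq_P p'$. Granting this, each of the three cases becomes mechanical. For $\sigma = \tau$, given arbitrary $q' \sim q$, $q'' \leq_P q'$, and $\langle q_0, \rho\rangle \in \sigma$ with $proj_Q(q'') \leq_Q q_0$, feed $p'$ together with $p'' := q''$ into the hypothesis on $p$ to extract $p''' \leq_P q''$ with $p''' \Vdash^{*} \rho \in \tau$, and set $q''' := p'''$. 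The $\rightarrow$ and $\forall$ clauses follow the same template, and the symmetric half of the $=$ clause is identical.

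For the sublemma itself, three things need to be verified. First, $p'$ is a valid $P$-condition: each of the three defining restrictions (no descendant of an IN-node lies in $dom$; every descendant of an OUT-node is labeled IN or OUT; every labeled $\infty$-node both of whose children are labeled has at least one $\infty$-child) is preserved under restriction to a subdomain. Second, $p' \sim p$: chasing $q''(u) = q'(u)$ for $u \in dom(q') \supseteq dom(p)$ (by $q'' \leq_P q'$), then $proj_Q(q'(u)) = proj_Q(q(u))$ (by $q' \sim q$), then $q(u) = p(u)$ (by $q \leq_P p$), one obtains $proj_Q(p')(u) = proj_Q(p)(u)$ throughout $dom(p)$. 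Third, $q'' \leq_P p'$ is immediate from the construction of $p'$ as a restriction of $q''$ to $dom(p)$.

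The main obstacle I anticipate is the third validity constraint for $p'$: naively, restricting $q''$ to a smaller domain could seem to destroy an $\infty$-node's mandatory $\infty$-child if that child lived only outside $dom(p)$. However, that constraint fires only when \emph{both} children of the $\infty$-node are themselves in $dom(p')$, in which case both lie in $dom(q'') \supseteq dom(p)$, so $q''$'s own validity supplies one of them—necessarily already in $dom(p')$—with label $\infty$. Once this is in hand the sublemma is immediate and the remainder of the induction is bookkeeping.
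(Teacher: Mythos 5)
Your proof is correct and follows essentially the same route as the paper's: handle $\in$ via monotonicity of $proj_Q$, dispatch $\wedge,\vee,\exists$ trivially, and for $=,\rightarrow,\forall$ observe that $q'' \leq_P q'\upharpoonright dom(p) \sim p$ so the hypothesis on $p$ can be invoked directly (your $p' = q''\upharpoonright dom(p)$ equals $q'\upharpoonright dom(p)$ since $q''\leq_P q'$). The one place you go beyond the paper's terse statement is in explicitly checking that the restriction is a legitimate member of $P$, in particular that the conditional $\infty$-child requirement survives restriction; the paper silently assumes this, and your verification is correct.
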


\begin {proof}
Inductively on $\phi$. For $\in$, use that $proj_Q$ is monotone.
The cases $\wedge, \vee,$ and $\exists$ are trivial inductions.
For the remaining cases, suppose $q'' \leq_P q', q' \sim q,$ and
$q \leq_P p.$ Then $q'' \leq_P q' \upharpoonright dom(p) \sim p$,
and use that $p \Vdash^* \phi$.
\end {proof}

\begin {Pro} \label {truth}
$\bot \models \phi$ iff $p \Vdash^* \phi$ for some $p \in G.$
\end {Pro}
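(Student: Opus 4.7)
The plan is to run induction on the complexity of $\phi$, proving both directions at each case, and exploiting Lemmas \ref{equivalence} and \ref{extension} throughout. The setup mirrors the standard forcing-truth lemma, but two features need attention: internal membership at $\bot$ is governed by $proj_Q(G)$, not by $G$; and internal equality at $\bot$ is the extensional equality computed across the entire (non-well-founded) Kripke model using elementarity.

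For atomic $\in$: $\bot \models \sigma \in \tau$ unfolds to the existence of $\langle q, \rho \rangle \in \tau$ with $q \in proj_Q(G)$ and $\bot \models \sigma = \rho$. By genericity of $G$, $q \in proj_Q(G)$ iff there is $p \in G$ with $proj_Q(p) \leq_Q q$; combining with the inductive equality clause and the fact that any two members of $G$ have a common refinement in $G$, one lands on $p \Vdash^* \sigma \in \tau$. For atomic $=$: equality at $\bot$ is defined by extensional equality at every successor indexed by a legal weakening $f(G)_{p'}$; each such $f(G)_{p'}$ is $f(P)$-generic through $p'$ by the legal-weakening lemma, so by elementarity a witness to failure of extensionality at that successor pulls back to a condition $p''' \leq_P p''$ doing the required work in the $\Vdash^*$-clause, and conversely. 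The two symmetric halves of that clause match the two membership directions of extensionality.

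The propositional cases $\wedge, \vee$ and $\exists$ are routine inductions. The interesting cases are $\rightarrow$ and $\forall$. Here the pattern ``$p' \sim p, \; p'' \leq_P p'$'' in $\Vdash^*$ is precisely engineered to range over all legal weakenings available after $p$: the $\sim$ step permits conversion of $\infty$-labels into OUT (the hallmark of a legal weakening), while $\leq_P$ then refines inside $P$. By the legal-weakening lemma, every successor of $\bot$ in the Kripke structure is reached in exactly this way (after transport by the elementary embedding into the ultrapower), so matching ``truth at every successor'' against the $\Vdash^*$-clause is natural, and the ``for some $p''' \leq_P p''$'' part absorbs the density arguments needed on the forcing side.

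The main obstacle I expect is the right-to-left direction of the implication case: assuming $\bot \models \phi \to \theta$, produce $p \in G$ with $p \Vdash^* \phi \to \theta$. The natural approach is contrapositive --- if no $p \in G$ works, then one needs to build a successor node of $\bot$ at which $\phi$ holds but $\theta$ fails, contradicting the hypothesis. The delicacy is that the witnessing node must be constructible as a legal weakening living in the ultrapower $N$ and agreeing with $G$ on standard levels (so as to belong to $\mathcal{H}$). It is exactly here that the definition of $\sim$, together with $f$'s elementarity and the restriction built into $\mathcal{H}$, allows the failed $P$-condition obtained in $N$ to be pulled back to a standard condition whose negation lies densely below any candidate $p \in G$, yielding the required contradiction.
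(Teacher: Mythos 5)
Your overall architecture is right: induction on $\phi$, with the atomic cases unwound via $proj_Q(G)$ and extensional equality, the propositional and existential cases routine, and the real content in $=$, $\rightarrow$, and $\forall$. You also correctly identify the delicate spot --- the converse direction, where one must produce a successor node of $\bot$ (i.e., a legal weakening in $\mathcal{H}$) at which the implication or universal fails.

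However, your proposed resolution of that delicacy does not work as described, and you are missing the actual mechanism. You say the failed condition ``obtained in $N$'' should be ``pulled back to a standard condition whose negation lies densely below any candidate $p \in G$.'' That is backwards, and a standard pullback cannot succeed. Here is why: if no $p \in G$ $*$-forces $\phi$, then for each such $p$ the definition hands you $p' \sim p$ and $p'' \leq_P p'$ witnessing the failure. But $p''$ may well flip $\infty$-labels to OUT at \emph{standard} levels of $2^*$; any node built from $p''$ would then change $B$ on its standard part and so would not lie in $\mathcal{H}$. Nothing in the definition of $\sim$ or in elementarity alone prevents this. The paper's fix goes the other way: one introduces the dense sets $D_n$ (conditions whose domain sits on a single level $k > n$, with every length-$k$ node either labeled $\infty$ or having an IN ancestor), observes that if $q \in D_0 \cap G$ and $q' \sim q$ then \emph{any} extension of $q'$ can be completed to a legal weakening of $G$, and then uses \emph{overspill} in $N$ to pick a non-standard $p \in f(G) \cap f(D_0)$. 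Because that $p$ lives entirely above the standard levels, the failure witness $p''$ obtained from it only disturbs non-standard levels, and hence can be extended to $p'''$ with $f(G)_{p'''}$ a genuine member of $\mathcal{H}$. Inductively that node satisfies $\phi$ but not $\theta$ (or witnesses inequality, or provides a $\sigma$ with $\lnot\phi(\sigma)$), contradicting $\bot \models \phi \to \theta$. Without the $D_n$/overspill step there is no way to guarantee the witness avoids the standard part, so the argument you sketched has a genuine gap at exactly the point you flagged as the obstacle.
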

\begin {proof}
Inductively on $\phi.$

$\sigma \in \tau$: $\bot \models \sigma \in \tau$ iff there are $p
\in G$ and $\langle q, \rho \rangle \in \tau$ such that $proj_Q(p)
\leq_Q q$ and $\bot \models \sigma = \rho$. Inductively $\bot
\models \sigma = \rho$ iff there is an $r \in G$ $*$-forcing the
same. In one direction, using lemma \ref {extension}, $p \cup r$
suffices, in the other we have $p = r$.

$\sigma = \tau$: Suppose $p \in G$ and $p \Vdash^* \sigma = \tau$.
By taking $p'$ equal to $p$ in the definition of $\Vdash^*$, for
every member $\rho$ of either $\sigma$ or $\tau$, it is dense to
$*$-force $\rho$ to be in the other set. By the genericity of $G$
some such $p'''$ will be in $G$, and so inductively $\rho$ will
end up in the other set. This shows that $\sigma$ and $\tau$ have
the same members at $\bot$. Regarding a future node $f(G)_{p''}$,
because $f(G)_{p''}$ is a legal weakening of $f(G)$, $p''
\upharpoonright dom(p) \sim p$, so again it is dense for any
member of $\sigma$ or $\tau$ to be forced into the other, so they
have the same members at node $f(G)_{p''}$. Hence $\bot \Vdash^*
\sigma = \tau.$

Conversely, suppose for all $p \in G \; p \not \Vdash^* \sigma =
\tau.$ That means there are $p' \sim p, p'' \leq_P p',$ and $\rho$
forced by $p''$ into $\sigma$ (without loss of generality), but
$p''$ has no extension $*$-forcing $\rho$ into $\tau$. For every
natural number $n$ the set $D_n = \{q \mid$ for some $k
> n, \; dom(q) \subseteq 2^k,$ and all binary sequences of length $k$
either are labeled $\infty$ by $q$ or some initial segment is
labeled IN by $q$\} is dense. Hence cofinally many levels of $G$
are in $D_0$. Observe that if $q$ is in $D_0 \cap G$ and $q' \sim
q$ then any extension of $q'$ can be extended again to induce a
legal weakening of $G$. In $N$, by overspill choose $p \in f(G)$
to be in $f(D_0)$. Choose $p'' \leq_P p' \sim p$ and $\rho$ as
given by the case hypothesis. Extend $p''$ to $p'''$ so that
$f(G)_{p'''}$ is a legal weakening of $f(G)$. Since $p'''$ has no
extension $*$-forcing $\rho$ into $\tau$, inductively at node
$f(G)_{p'''}$ $\rho$ is not a member of $\tau$. Hence $\bot \not
\models \sigma = \tau.$

$\phi \wedge \theta$: Trivial.

$\phi \vee \theta$: Trivial.

$\phi \rightarrow \theta$: Suppose $p \in G$ and $p \Vdash^* \phi
\rightarrow \theta$. At any node $f(G)_{p'},$ if $f(G)_{p'}
\models \phi$ then inductively choose $p'' \in f(G)_{p'}$ such
that $p'' \Vdash^* \phi.$ Without loss of generality $p''$ can be
taken to extend $p'$. Since $f(G)_{p'}$ indexes a node in the
model, $p' \upharpoonright dom(p) \sim p$, so $p'' \leq_P p'
\upharpoonright dom(p) \sim p$. By the case assumption there is a
$p'''$ extending $p''$ with $p''' \Vdash^* \theta.$ By the
genericity of $f(G)_{p'}$ there is such a $p'''$ in $f(G)_{p'}$.
So inductively $f(G)_{p'} \models \theta.$ At node $\bot$ the
argument is even simpler, as $p'$ can be chosen to be $p$. So
$\bot \models \phi \rightarrow \theta.$

Conversely, suppose for all $p \in G$ that $p \not \Vdash^* \phi
\rightarrow \theta.$ That means there are $p' \sim p$ and $p''
\leq_P p'$ with $p'' \Vdash^* \phi$ but no extension of $p'' \;
*$-forces $\theta$. As in the = case above, in $N$, by overspill
choose $p \in f(G)$ to be in $f(D_0)$. Choose $p'' \leq_P p' \sim
p$ as given by the case hypothesis. Extend $p''$ to $p'''$ so that
$f(G)_{p'''}$ is a legal weakening of $f(G)$. Inductively
$f(G)_{p'''} \models \phi,$ but since $p'''$ has no extension
$*$-forcing $\theta$, inductively $f(G)_{p'''} \not \models
\theta$. Hence $\bot \not \models \phi \rightarrow \theta.$

$\exists x \; \phi(x)$: Trivial.

$\forall x \; \phi(x)$: Suppose $p \in G$ and $p \Vdash^* \forall
x \; \phi(x)$. For any node $f(G)_{p'}$ and any $\sigma$ in the
universe there, $p' \leq_P p' \upharpoonright dom(p) \sim p$, so
there is a $p'' \leq_P p'$ such that $p'' \Vdash^* \phi(\sigma).$
By genericity there is such a $p''$ in $f(G)_{p'}$. Inductively
$f(G)_{p'} \models \phi(\sigma).$ So every element at node
$f(G)_{p'}$ satisfies $\phi$ there. At node $\bot$ the argument is
even easier, since $p'$ can be chosen to be $p$. Hence $\bot
\models \forall x \; \phi(x).$

Conversely, suppose for all $p \in G$ that $p \not \Vdash^*
\forall x \; \phi(x).$ That means there are $p' \sim p, p'' \leq_P
p',$ and $Q$-term $\sigma$ such that $p''$ has no extension
$*$-forcing $\phi(\sigma)$. As in the cases of = and $\rightarrow$
above, in $N$, by overspill choose $p \in f(G)$ to be in $f(D_0)$.
Choose $p'' \leq_P p' \sim p$ and $\sigma$ as given by the case
hypothesis. Extend $p''$ to $p'''$ so that $f(G)_{p'''}$ is a
legal weakening of $f(G)$. Since $p'''$ has no extension
$*$-forcing $\phi(\sigma)$, inductively $f(G)_{p'''} \not \models
\phi(\sigma)$. Hence $\bot \not \models \forall x \; \phi(x).$

\end {proof}

\begin {Thm} $\bot \models IZF$
\end {Thm}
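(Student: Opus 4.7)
The plan is to verify each axiom of IZF at $\bot$ by invoking Proposition \ref{truth}: for each axiom $\phi$, it suffices to produce a condition $p \in G$ with $p \Vdash^* \phi$. The whole point of the $\Vdash^*$ relation, and especially of Lemma \ref{equivalence}, is that it depends only on $\mathrm{proj}_Q(p)$; this is exactly what will let the separating term built below be a $Q$-term rather than merely a $P$-term, and thus actually lie in the Kripke model.

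Extensionality is immediate from the extensional definition of equality in the Kripke structure. Pairing, Union, and Infinity admit the evident canonical $Q$-term witnesses, and the weakest (empty-domain) condition $\Vdash^*$-forces their defining properties by a direct check against the inductive clauses of $\Vdash^*$. Set Induction is handled by an external well-founded induction on the rank of $Q$-terms, with the elementarity of the embeddings $f$ propagating the conclusion from $\bot$ to every successor node.

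The main obstacle is Separation, which is precisely what the $\Vdash^*$ machinery was engineered for. Given a $Q$-term $\sigma$ and a formula $\phi(x)$ with $Q$-term parameters, consider
\[
\tau \;=\; \bigl\{\langle q,\rho\rangle \bigm| \langle p,\rho\rangle \in \sigma \text{ for some $Q$-condition } p \geq_Q q, \text{ and } q \Vdash^* \phi(\rho)\bigr\}.
\]
Lemma \ref{equivalence} ensures that $q \Vdash^* \phi(\rho)$ is invariant under $\sim$, so $q$ may be restricted to $Q$-conditions and $\tau$ really is a $Q$-term. Proposition \ref{truth}, together with density arguments paralleling the $\rightarrow$ and $\forall$ clauses in its proof, then shows that the trivial condition $\Vdash^*$-forces $\forall x(x \in \tau \leftrightarrow x \in \sigma \wedge \phi(x))$; by the relativization of the construction to each successor node of $\bot$, this gives Separation throughout the model.

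Power Set is handled by forming the $Q$-term of all ``potential subsets'' of $\sigma$ --- pairs $\langle r,\tau\rangle$ with $r \in Q$ and $\tau$ a $Q$-term whose members are drawn only from among those of $\sigma$ --- and applying Separation to cut down to the internal subsets; Lemma \ref{equivalence} again guarantees that every internal subset of $\sigma$ at every node is represented. Collection is obtained by reflection inside $M[G]$: given $\bot \models \forall x \in \sigma\, \exists y\, \phi(x,y)$, Proposition \ref{truth} yields $Q$-term witnesses for each member of $\sigma$, and a sufficiently large rank in $M[G]$ packages these into a single $Q$-term which $\Vdash^*$-forces the desired Collection instance. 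The crux throughout is Separation; once that is in place the remaining axioms follow by standard forcing-model constructions, suitably adapted via $\Vdash^*$.
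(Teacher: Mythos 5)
Your overall strategy matches the paper's: use Proposition \ref{truth} and Lemma \ref{equivalence} to reduce everything at $\bot$ to $\Vdash^*$-facts, with Separation as the crux, and your Sep-term (taking $\langle q,\rho\rangle$ with $q$ a $Q$-condition beneath some $p$ with $\langle p,\rho\rangle\in\sigma$ and $q\Vdash^*\phi(\rho)$) is effectively the same as the paper's $\mathrm{Sep}_{\phi,\sigma}$ once one applies Lemma \ref{equivalence}. The Power Set and the easy axioms are also handled the same way. So the core is right.

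However, there are two real gaps. For Set Induction you say it is ``handled by an external well-founded induction on the rank of $Q$-terms,'' but there is no such external well-founded induction available: the Kripke model passes through an $\omega$-iteration of ultrapowers of $M[G]$, and the resulting ordinal ranks are externally ill-founded (the paper flags exactly this). The correct argument (and the paper's) localizes to a single node $f(G)_p$, observes that the restricted model is definable in a finite iteration $N$ of the ultrapower, and uses that $N\models\mathrm{ZF}$ to choose a counterexample term of \emph{least} $V$-rank inside $N$; one then propagates that minimality to later nodes by elementarity. Without this internalization, ``induction on rank'' does not get off the ground.

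For Collection, you only discuss producing witnesses for the members of $\sigma$ at $\bot$ and packaging them by a large rank in $M[G]$. But Collection must hold at $\bot$ as a Kripke statement, which means the bound must also cover witnesses for $f(\sigma)$'s members as they appear at all later nodes --- and those nodes live in iterated ultrapowers whose ordinals outrun any single ordinal of $M[G]$ naively chosen. The paper addresses this by noting that the set of nodes is set-sized, so there are only set-many interpretations to bound, and crucially that the standard ordinals remain cofinal in the ordinals of all the iterated ultrapowers, so a single $\hat{V_\kappa}$ works. Your proposal omits both the ``all nodes'' quantification and the cofinality fact, and as written does not establish the instance of Collection.
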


\begin {proof}
Empty Set and Infinity are witnessed by $\emptyset$ and $\hat
\omega$ respectively. Pairing is witnessed by $\{ \langle
\emptyset, \sigma \rangle, \langle \emptyset, \tau \rangle \}$,
and Union by $\{ \langle q \cup r, \rho \rangle \mid$ for some
$\tau \; \langle q, \tau \rangle \in \sigma$ and $\langle r, \rho
\rangle \in \tau \}$. Extensionality holds because that's how =
was defined.

For $\epsilon$-Induction, suppose $\bot \models ``(\forall y \in x
\; \phi(y)) \rightarrow \phi(x)."$ If it were not the case that
$\bot \models ``\forall x \; \phi(x)"$, then at some later node
$G_p$ there would be a term $\sigma$ with $f(G)_p \not \models
\phi(\sigma)$. The restricted Kripke model of node $f(G)_p$ and
its extensions is definable in a model of ZF, say $N$, which is a
finite iteration of the ultrapower construction, and so is itself
a model of ZF. Hence, in $N,$ $\sigma$ can be chosen to be such a
term of least $V$-rank, say $\kappa$. Then at all nodes after
$f(G)_p$, by elementarity, it holds that $f(\kappa)$ is the least
rank of any term not satisfying $\phi$. So all members of
$\sigma$, being of lower rank, satisfy $\phi$ at whatever node
they appear. By the induction hypothesis, $\sigma$ must also
satisfy $\phi$, contradicting the assumption that some term does
not satisfy $\phi.$

For the powerset of $\sigma$ take all sets with members of the
form $\langle q, \tau \rangle$, where $\langle p, \tau \rangle \in
\sigma$ and $q \leq_Q p$.

It is easy to give a coarse proof of Bounding. The Kripke model
can be built in $M[G]$. Given a $\sigma$ at $\bot$, Bounding in
$M[G]$ can be used to bound the range of $\phi$ on $\sigma$ at
$\bot$. Also, the set of nodes is set-sized, so there are only
set-many interpretations of $f(\sigma)$ at the other nodes, so the
range of $\phi$ on them can also be bounded. Since the standard
ordinals are cofinal through the ordinals in all of the iterated
ultrapowers, by picking $\kappa$ large enough, $\hat {V_\kappa}$
suffices for bounding the range of $\phi$ on $\sigma$.

For Separation, given $\phi$ and $\sigma$, let Sep$_{\phi,
\sigma}$ be $\{ \langle proj_Q(p), \tau \rangle \mid $ for some
$\langle q, \tau \rangle \in \sigma$ with $p \leq q$ we have $p
\Vdash^* \phi(\sigma) \}$. By lemmas \ref {equivalence} and \ref
{truth}, this works.
\end {proof}

Although this model does not satisfy \FAND, it does satisfy
$\neg\neg$\FAND, as the terminal nodes are dense. Admittedly this
is a rather weak failure of \FAND. In the final section, we will
address the issue of getting stronger failures of \FAND.

\section {\FAND does not imply \FANc}
We will need a tree similar to that of the last proof. In fact, we
will need two trees: the $c-$bar $C$, and the decidable set $C'$
from which $C$ is defined. (Both can be viewed either as 2$^*$
with labels or as subtrees of 2$^*$.) Mostly we will focus on $C$.
Because \FANc refers to eventual membership in a tree, the
difference between IN and OUT nodes is no longer relevant: the bar
is uniform beneath any OUT node. So we can describe the forcing in
terms similar to those before, and with some simplifications
introduced. The forcing partial order {\it P} will be the set of
appropriate labelings of finitely many nodes from 2$^*$. A
labeling of nodes assigns to each one either IN or $\infty$, with
the following restrictions. Any node labeled IN has no descendant,
the idea being that once a node gets into the eventual bar so are
all of its descendants automatically, so nothing more need be
said. For any node labeled $\infty$, if both children are labeled,
then at least one of them must be labeled $\infty$. Let $G$ be
{\it P}-generic through the condition labeling the empty sequence
with $\infty$.

As before, we will need to look at weaker trees, ones with bigger
bars.

\begin {Def} $H$ is a legal weakening of $G$ if $H$ can be
constructed by choosing finitely many nodes labeled $\infty$ by
$G$, whose siblings are also labeled $\infty$ by $G$, and changing
those labels to IN and eliminating all descendants.
\end {Def}

As before, each legal weakening $H$ can be summarized by one
forcing condition $p$, which consists of those nodes changed by
$H$ and their ancestors, labeled as in $G$. $H$ is then the set of
conditions in $G$ each minimally changed to be consistent with
$p$. Hence we refer to $H$ as $G_p$.

\begin {Lem}
If $G_p$ is a legal weakening of $G$ then $G_p$ is generic through
$p$.
\end {Lem}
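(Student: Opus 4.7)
The plan is to mimic the analogous lemma from Section 3, observing that the argument is in fact simpler here because the forcing in this section has no OUT labels. First I would define a projection map $proj_p$ sending conditions that extend $G\upharpoonright\text{dom}(p)$ to conditions extending $p$, by making the minimal modification: wherever $p$ has relabeled an $\infty$-node as IN and dropped its descendants, $proj_p$ does the same; all other labels are left untouched. Note that $G\upharpoonright\text{dom}(p)$ is itself a condition of $P$ lying in $G$, since every node on which $p$ and $G$ disagree is a node labeled $\infty$ by $G$ whose sibling is also labeled $\infty$ by $G$, so $G\upharpoonright\text{dom}(p)$ is compatible with $G$.

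Given any dense set $D\subseteq P$ beneath $p$, the key step is to establish that $proj_p^{-1\prime\prime}D$ is dense beneath $G\upharpoonright\text{dom}(p)$. Given $q\leq_P G\upharpoonright\text{dom}(p)$, extend $proj_p(q)$ to some $r\in D$. The crucial observation is that any node in $\text{dom}(r)\setminus\text{dom}(proj_p(q))$ must be a descendant of a node whose label was unchanged by $proj_p$: nodes altered by $proj_p$ became IN, and by the rules of $P$ have no descendants. Hence one can lift $r$ back to an extension $q_r$ of $q$ by copying $r$'s labels at these new nodes onto $q$; then $proj_p(q_r)=r$, so $q_r\in proj_p^{-1\prime\prime}D$ as needed.

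Once density is established, genericity of $G$ produces some $q\in G\cap proj_p^{-1\prime\prime}D$, and then $proj_p(q)$ lies in $G_p\cap D$, as required. The one point requiring verification is that $q_r$ is a legal condition in $P$, that is, that copying $r$'s labels onto $q$ preserves the constraint that every $\infty$-node with two labeled children has at least one $\infty$-child. Since at every ancestor of a newly labeled node in $q_r$ the labels in $q_r$ agree with those in $r$ (the chain up to the unchanged node is identical in $q$ and $proj_p(q)$ by construction of $proj_p$), the constraint holds in $q_r$ exactly because it holds in $r$. With only two possible labels, IN and $\infty$, and no OUT/IN interaction to manage, this verification is essentially immediate, which is why this lemma is lighter than its predecessor in Section 3.
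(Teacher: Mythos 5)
Your proof is correct and follows the same approach as the paper, whose own proof of this lemma is a one-line reference back to the projection-and-lift argument for the analogous lemma in Section 3. You have reproduced that argument and adapted it to the IN/$\infty$-only forcing of this section, correctly observing that the absence of OUT labels lightens the verification that $q_r$ is a legal condition.
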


\begin {proof}
As in the corresponding lemma in the previous section.
\end {proof}

\begin {Def} Terms are defined inductively (through the
ordinals) as sets of the form $\{ \langle B_i, \sigma_i \rangle
\mid i \in I\}$, where $I$ is any index set, $\sigma_i$ a term,
and $B_i$ a finite set of truth values. A truth value is a symbol
of the form $b^+$ or $b'$ or $\neg b'$, for $b \in 2^*$ a finite
binary sequence.
\end {Def}

\begin {Def} Let $C$ be the term $\{ \langle \{b^+\}, \hat{b}
\rangle \mid b \in 2^*\}$, and $C'$ be $\{ \langle \{b'\}, \hat{b}
\rangle \mid b \in 2^*\}$.
\end {Def}

In our final model, (the interpretation of) $C$ will be the
$c$-bar induced by (the interpretation of) $C'$, and $C$ will not
be uniform, thereby falsifying \FANc. Furthermore, we will show
that \FAND holds in this model.

We can now describe the ultimate Kripke model. Recall that $G$ is
generic for $P$ over $M$ and labels the empty sequence with
$\infty$. The bottom node $\bot$ of the Kripke model consists of
the terms. At $\bot$, $b^+$ counts as true iff $G(b)$ = IN, $b'$
always counts as true, and $\neg b'$ never counts as true. Later
nodes will have different ways of counting the various literals as
true. At any node, for $\sigma = \{ \langle B_i, \sigma_i \rangle
\mid i \in I\}$, if each member of some $B_i$ counts as true, then
at that node $\sigma_i \in \sigma$. This induces a notion of
extensional equality among the terms. One way of viewing this is
at any node to remove from a term $\sigma$ any pair $\langle B_i,
\sigma_i \rangle$ if some member of $B_i$ is not true at that
node. Then each remaining $\langle B_i, \sigma_i \rangle$ can be
replaced by $\sigma_i$. Equality is then as given by the Axiom of
Extensionality as interpreted in the model.

As for what the other nodes in the model are, there are two
different kinds. As in the last section, let $N$ be an ultrapower
of $M[G]$ using any non-principal ultrafilter on $\omega$, with
elementary embedding $f:M[G] \rightarrow N$. This necessarily
produces non-standard integers. In $N$, any forcing condition $p$
which induces a legal weakening of $f(G)$ will index a successor
node to $\bot$. At the node indexed by $p$, the universe will be
the terms of $N$ as interpreted by $f(G)_p$. That is, $b^+$ is
true if $f(G)_p(b)$ = IN, $b'$ is always true, and $\neg b'$
never. Regarding the embeddings from $\bot$, for a term $\sigma
\in M$, $f(\sigma)$ is a term in $N$, so send $\sigma$ to
$f(\sigma)$. In addition, definably over $M[G]$, any non-standard
$c \in 2^*$ with $f(G)(c) = \infty$ also indexes a node. At such a
node $c$, $b'$ counts as true iff $b \not = c$, $\neg b'$ counts
as true iff $b = c$, and $b^+$ counts as true iff $b \not
\subseteq c$ ($b$ is not an initial segment of $c$). Note that at
$\bot$ any $b'$ refers only to a standard $b$; for some $b'$ to be
declared false at a later node $c$, $b$ would have to equal $c$,
and $c$ indexes a node only if $c$ is non-standard. Hence there is
no conflict with the Kripke structure: once $b'$ is deemed true,
it remains true. Similarly with $b^+$: $G_p$ is a fattening of
$G$. Hence membership, being based on finitely many truth values,
is monotone.

Any node indexed by such a $c \in 2^*$ is terminal in the Kripke
ordering. Also, among nodes of the other kind, there is one
trivial condition $p$, the one with $p(\langle\rangle)$ = IN. This
is also a terminal node, where each $b^+$ and each $b'$ is true.
At any other node, iterate. That is, suppose $p$ is not the
preceding condition. The model at $p$ can be built in $N$. As an
ultrapower of $M[G]$, $N$ internally looks like $f(M)[f(G)]$. The
structure at node $p$ could be built in $f(M)[f(G)_p]$, where
$f(G)_p$ is generic through $f(P)$ (and non-trivial). Hence the
construction just described, using an ultrapower and legal
weakenings and non-standard binary strings to get additional
nodes, can be performed in $f(M)[f(G)_p]$ just as above. This
provides immediate successors to nodes indexed by (non-trivial)
$p$'s. Iterate $\omega$-many times.

The picture is that, at $\bot,$ $C$ looks like $G$, that is, those
nodes $G$ assigns to be IN. This tree gets fatter at later nodes
that are legal weakenings. At terminal nodes $c$, $C$ is
everything but the branch up to $c$. At most nodes $C'$ looks like
everything; at node $c$, where $c$ is non-standard relative to its
predecessor, we find the one thing not in $C'$, namely $c$.

What we need to show is that this model satisfies IZF and \FAND,
and falsifies \FANc.

\begin {Lem} $\bot \not \models$ \FANc.

\end {Lem}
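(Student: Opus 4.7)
The plan is to verify at $\bot$ that the term $C$ is a non-uniform $c$-bar witnessed by $C'$. The structure of the argument parallels the proof that $\bot \not\Vdash \FAND$ in the previous section, with $C$ playing the role of the bar and $C'$ its decidable witness. I will read ``$b^+$ true at $\bot$'' with the natural extension convention, namely that $b^+$ is true whenever $b$ lies at or below some IN-labeling in $G$ (so that $C$ really is the bar given by $G$), and analogously at legal weakenings; this is what the informal description ``$C$ looks like $G$'' intends.

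I would begin with the failure of uniformity. Suppose for contradiction $\bot \models \exists N\, \forall \alpha \exists m \leq N\, \overline{\alpha}m \in C$. Any witness for the outer existential is (represented by) a standard integer $\hat N$. Working inside $N$, overspill combined with the density of $\infty$-paths in $G$ produces a non-standard $c \in f(2^*)$ with $f(G)(c) = \infty$ and $|c| > N$. At the terminal node indexed by $c$, $\hat b \in C$ holds iff $b \not\subseteq c$; an internally definable infinite sequence $\alpha$ extending $c$ by zeros then has $\overline{\alpha}m \subseteq c$ for every standard $m \leq N$, so no $m \leq N$ puts $\overline{\alpha}m$ into $C$ at $c$, contradicting the putative uniformity.

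Next I would verify the three remaining internal claims: that $C$ is a bar, that $C'$ is decidable, and that $u \in C \leftrightarrow \forall v\, u * v \in C'$. Bar-ness is handled by case analysis on successor nodes: at a terminal $c$, any internal branch $\alpha$ either diverges from $c$ at a standard level (and $\overline{\alpha}$ at that level fails to be a prefix of $c$, hence lies in $C$) or agrees with $c$ past its non-standard length (and $\overline{\alpha}n \not\subseteq c$ for any internal $n > |c|$); at legal weakening nodes, internal genericity of $f(G)_p$ supplies an $n$ by an argument analogous to the key proposition of the previous section. Decidability of $C'$ is immediate from the determinate truth of each literal $b'$ at every node. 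For the $c$-bar biconditional, the key observation is that $\hat b \in C$ amounts to $b$ lying at or below an IN-label of the current generic, which is equivalent to no terminal successor $c$ extending $b$: when such a terminal exists, the internal $v = c - b$ makes $(b*v)' = c'$ false and refutes the universal, while otherwise $(b*v)'$ is true at every accessible node and every internal $v$. The main obstacle is the careful bookkeeping to make the biconditional uniform across every kind of successor node, namely plain legal weakenings, non-standard terminal $c$'s, and their iterated analogues in the $\omega$-many-fold ultrapower construction, so that the semantics of $b^+$ and $b'$ line up correctly throughout the Kripke structure.
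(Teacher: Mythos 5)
Your overall decomposition of the lemma is right (non-uniformity, decidability of $C'$, the $c$-bar biconditional, and bar-ness), and the non-uniformity, decidability, and biconditional sketches are in line with the paper. The gap is in the bar-ness argument, which is the heart of the lemma. You dispose of terminal nodes explicitly, but for the legal-weakening nodes you appeal to ``an argument analogous to the key proposition of the previous section.'' That proposition concerns $Q$-terms and its proof hinges on the $q_{\mathrm{alt}}$ device, which is tied to the shadow forcing $Q$; the terms in this section are built from finite sets of truth values $b^+$, $b'$, $\neg b'$, not $Q$-terms, so the earlier argument does not transfer as stated. You have not supplied any substitute, and the analogy does real work that you would need to verify.

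The paper's actual argument at this point is different and simpler, and you have missed it: after reducing WLOG to $\bot$, it observes that if the standard part of a forced path $\sigma$ were not in the ground model, then two conditions $q,r\leq p$ would force incompatible facts $b^\frown 0\in\sigma$ and $b^\frown 1\in\sigma$; forming $\mathrm{inpart}(q,r)$ (IN wherever either of $q,r$ is IN) yields a legal weakening whose node makes \emph{both} facts true, because membership in terms depends only on positive information $B_p$. This contradicts $\sigma$ being a path, so the standard part of $\sigma$ lies in the ground model, and ordinary genericity of $G$ then shows $\sigma$ hits an IN node, hence $C$. This ``positive-part'' observation is the section-specific mechanism replacing the previous section's $Q$-term machinery, and it (together with the WLOG relativization to $\bot$) is precisely what your sketch needs and lacks. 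Your overspill detour for non-uniformity is also more elaborate than necessary---at $\bot$ the $\infty$-labeled node at each standard level already witnesses non-uniformity directly---but that is a matter of economy, not correctness.
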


\begin {proof}
It is easy to see that $C$ is the $c-$set induced by $C'$: once
$b$ is forced into $C$, none of its descendants index terminal
nodes, so no descendant is forced out of $C'$; similarly, if $b$
is not forced into $C$, say at node $p$, then $G_p(b) = \infty$,
and in $N$ some non-standard extension $c$ of $b$ will also be
labeled $\infty$ by $f(G)_p$, and that $c$ will index a node at
which $c$ is not in $C'$. Clearly, $C$ is not uniform, and $C'$ is
decidable. So it remains only to show that $\bot \models C$ is a
bar.

Suppose $\sigma$ is forced to be an infinite binary path at some
node. If that node is a terminal node, $C$ contains cofinitely
many members of $2^*$, and so certainly intersects $\sigma$. Else
without loss of generality we can assume the node is $\bot$. Then,
for some $p \in G, \; p \Vdash ``\bot \models \sigma$ is an
infinite binary path." If it is not dense beneath $p$ to force the
standard part of $\sigma$ (that is, $\sigma$ applied to the
standard integers) to be in the ground model, then extensions $q$
and $r$ of $p$ force incompatible facts about $\sigma$. The only
incompatible facts about $\sigma$ are of the form $b^\frown0 \in
\sigma$ and $b^\frown1 \in \sigma$. The positive parts of $q$ and
$r$ (that is, $q^{-1}(IN)$ and $r^{-1}(IN)$) induce a legal
weakening of $G$. That is, there is a canonical condition
inpart($q,r$), with domain $dom(q) \cup dom(r)$, that returns IN
on any node that either $q$ or $r$ returns IN on, as well as on
any node if inpart($q,r$) returns IN on both children, else OUT.
Because terms use only positive (i.e. IN) information, at the node
$f(G)_{inpart(q,r)}$, both $b^\frown0$ and $b^\frown1$ are in
$\sigma$. (More coarsely and perhaps more simply, at the node
induced by the trivial condition sending the empty sequence to IN,
the same conclusion holds for the same reason.) Hence $\bot$ could
not have forced $\sigma$ to be a path in the first place.
Therefore $p$ forces $\sigma$ on the standard binary tree to be in
the ground model. It is easy to see that generically $G$ labels
some node in $\sigma$ IN.
\end {proof}

\begin {Lem}
$\bot \models \FAND$
\end {Lem}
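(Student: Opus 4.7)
The plan is to reduce the internal Fan Theorem for decidable bars at $\bot$ to the classical Fan Theorem available in the metatheory $M[G]$, and then propagate the resulting bound to every node of the Kripke model via persistence of positive atomic facts.

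First I would let $D$ be a term for which $\bot \models$ ``$D$ is a decidable bar on $2^{\ast}$''. Since $\bot$ is the root, decidability there yields, for each standard $u \in 2^{\ast}$, outright either $\bot \models \hat u \in D$ or $\bot \models \hat u \notin D$. I would set $\tilde D = \{u \in 2^{\ast} : \bot \models \hat u \in D\} \in M[G]$ and verify that $\tilde D$ is a classical bar in $M[G]$: for any $\alpha \in 2^{\NN}$ in $M[G]$ the term $\hat\alpha$ is a path at $\bot$, and because $\NN$ at $\bot$ is $\hat\NN$, the Kripke witness to $\bot \models \exists n\, \overline{\hat\alpha}n \in D$ must be a specific standard $n$, giving $\alpha \upharpoonright n \in \tilde D$. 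Since $M[G] \models \mathrm{ZFC}$, the classical Fan Theorem then produces a standard integer $N$ uniformly bounding $\tilde D$.

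The second step is to show this $N$ witnesses $\bot \models \forall \alpha \in 2^{\NN}\, \exists m \leq N\, \overline{\alpha}m \in D$. That is, at every node $w$ of the Kripke model and for every term $\sigma$ forced at $w$ to be an infinite binary path, I need to produce some $m \leq N$ with $w \models \overline{\sigma}m \in D$. My argument will be that since $\{0,1\}$ is intuitionistically decidable and $N$ is a specific standard integer, for each $i < N$ exactly one of $w \models \sigma(i) = 0$ or $w \models \sigma(i) = 1$ holds, so the first $N$ values of $\sigma$ at $w$ assemble into a specific $b \in 2^N$. Because $N$ is standard and the values are in $\{0,1\}$, this $b$ already lives in the ground $2^N$, hence has some prefix $b \upharpoonright m$ in $\tilde D$ with $m \leq N$ by uniformity. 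Persistence of the positive atomic fact $\hat{b \upharpoonright m} \in D$ from $\bot$ to $w$, together with the evident equality $\overline{\sigma}m = \hat{b \upharpoonright m}$ at $w$, will then supply the required witness.

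The hard part will be justifying this at a terminal node $c$ indexed by a non-standard binary string, where the universe consists of the terms of the ultrapower interpreted under the $c$-rule and paths at $c$ may be genuinely new $f(\NN) \to \{0,1\}$ functions unrelated to $M[G]$. The resolution I have in mind is precisely that the bound $N$ is standard: the first $N$ values of any such path form an object of standard finite type and hence lie in the common $2^N$ shared by $M[G]$ and the ultrapower, which is enough to apply the uniformity of $\tilde D$ and then invoke persistence. For the first-kind successor nodes $f(G)_p$ and their iterated descendants, the argument relativizes to $f(M)[f(G)_p]$ and elementarity of $f$ ensures that $f(N) = N$ still bounds the corresponding image $f(\tilde D)$, so no new instance of the classical Fan Theorem is needed at each level of the iteration.
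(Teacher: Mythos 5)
Your propagation step (pushing the standard bound $N$ to every node via the standardness of $2^N$ and the persistence of the positive atomic fact $\widehat{b\upharpoonright m} \in D$) is correct, and in fact makes explicit something the paper leaves implicit. The first step, however, has a genuine gap. You place $\tilde D$ only in $M[G]$ and then argue it is a bar in $M[G]$ by considering the terms $\hat\alpha$ for arbitrary $\alpha \in 2^{\NN} \cap M[G]$. But in this construction the Kripke terms at $\bot$ are objects of the ground model $M$, not of $M[G]$; the generic $G$ enters only through the interpretation of the truth-value symbols $b^+$, not through the terms themselves. So $\hat\alpha$ is a legitimate term at $\bot$ only when $\alpha \in M$, and your argument establishes at most that $\tilde D$ bars the $M$-paths. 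That is not enough: a set in $M[G]$ that bars every $M$-path can still miss a path introduced by $G$ (for instance the complement of the set of initial segments of a generic $\infty$-branch) and hence fail to be uniform, so the classical Fan Theorem in $M[G]$ cannot be invoked as you do.

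The missing idea is the forcing argument the paper inherits from the preceding lemma: because Kripke terms carry only positive ($b^+$-type) information, if $p$ $*$-forces $D$ to be decidable then no two extensions $q, r$ of $p$ can $*$-force incompatible facts about membership in $D$. (If $q \Vdash^* \hat u \in D$ and $r \Vdash^* \hat u \notin D$, then $\mathrm{inpart}(q,r)$, a common $\leq_W$-weakening, would by monotonicity both $*$-force $\hat u \in D$ and, being $\leq_W r$, be barred from $*$-forcing it.) Hence a single $p \in G$ decides every membership question about $D$, which makes $\tilde D$ definable over $M$ from $p$, so $\tilde D \in M$. With that in hand, your argument that $\tilde D$ is a bar goes through correctly when restricted to $M$-paths $\hat\alpha$, the classical Fan Theorem in $M$ supplies the standard bound $N$, and your propagation step then completes the proof as you describe.
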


\begin {proof}
By arguments similar to the above. If a set of nodes $B$ is forced
by $p$ to be decidable, then no extensions of $p$ can force
incompatible facts about $B$. Hence $B$ is in the ground model. If
$B$ were not a bar in the ground model, there would be a ground
model path missing $B$. This path would also be in the Kripke
model. Hence $B$ is a bar in the ground model. Since the ground
model is taken to be classical, $B$ is uniform.
\end {proof}

Regarding getting IZF to be true, just as in the previous section,
the problem is that truth in the Kripke model is on the surface
determined by forcing conditions in the ground model, to which the
Kripke model has no access. The essence is to capture truth at a
node using those truth values that are allowed in the building of
terms.

\begin {Def} For a forcing condition $p, \; B_p = \{b^+ \mid$ for
some initial segment $c$ of $b, \; p(c)$ = IN\}.

For a set of truth values $B, \; B^+ = B \cap \{b^+ \mid b \in
2^*\}$. Also, $B$ is positive if $B$ contains no truth value of
the form $\neg b'$.
\end {Def}

\begin {Def} \begin {enumerate}
\item $\neg b' \Vdash^* B$ iff $c^+ \in B \rightarrow c^+ \not
\subseteq b', \; c' \in B \rightarrow c \not = b,$ and $\neg c'
\in B \rightarrow c = b.$
\item $\sigma^{\neg b'} = \{\sigma_i^{\neg b'} \mid$ for some
$\langle B_i, \sigma_i \rangle \in \sigma, \; \neg b' \Vdash^*
B_i\}$.
\item For $\phi(\sigma_1, \ldots, \sigma_n)$ in the language of the
Kripke model, that is, with parameters (displayed) terms,
$\phi^{\neg b'} = \phi(\sigma_1^{\neg b'}, \ldots, \sigma_n^{\neg
b'})$.
\item $\neg b' \Vdash^* \phi$, for $\phi$ in the language
of the Kripke model, if $\phi^{\neg b'}$ is true (i.e. in $V$).
Note that $\phi^{\neg b'}$ is a formula with set parameters.
\end {enumerate}

\end {Def}

\begin {Def}
$q \leq_W p$ ($q$ is a weakening of $p$ as conditions) if for $b
\in dom(p)$ either $p(b) = \infty$ or for some initial segment $c$
of $b \; q(c)$ = IN.
\end {Def}

The idea behind this definition is the $q$ may change some
$\infty$'s to IN's, as well as extend the domain of $p$. Notice
that $\leq_W$ is a partial order, and inpart($p,q$), from lemma
14, is the greatest lower bound of $p$ and $q$.

\begin {Def} $p \Vdash^* \phi$, for $\phi$ in the language of
the Kripke model, i.e. when $\phi$'s parameters are terms,
inductively on $\phi:$
\begin {itemize}
\item $p \Vdash^* \sigma \in \tau$ if for some
$\langle B_i, \tau_i \rangle \in \tau$ with $B_i$ positive, $B_i^+
\subseteq B_p$ and $p \Vdash^* \sigma = \tau_i.$
\item $p \Vdash^* \sigma = \tau$ if\\
$i)$ for all $\langle B_i, \sigma_i \rangle \in \sigma$ and $q
\leq_W p,$ if $B_i$ is positive and $B_i^+ \subseteq B_q$ then
there is an $r \leq q$ such that $r \Vdash^* \sigma_i \in \tau$,
and symmetrically between $\sigma$ and $\tau$, and\\ $ii)$ for all
$b \not \in dom(p)$, if for no initial segment $c$ of $b$ is $c^+$
in $B_p$, then $\neg b' \Vdash^* \sigma = \tau$.
\item $p \Vdash^* \phi \wedge \theta$ if $p \Vdash^* \phi$ and $p
\Vdash^* \theta$.
\item $p \Vdash^* \phi \vee \theta$ if $p \Vdash^* \phi$ or $p
\Vdash^* \theta$.
\item $p \Vdash^* \phi \rightarrow \theta$ if\\ $i)$ for all
$q \leq_W p$ if $q \Vdash^* \phi$ then there is an $r \leq q$ such
that $r \Vdash^* \theta,$ and\\ $ii)$ for all $b \not \in dom(p)$,
if for no initial segment $c$ of $b$ is $c^+$ in $B_p$, then $\neg
b' \Vdash^* \phi \rightarrow \theta$.
\item $p \Vdash^* \exists x \; \phi(x)$ if for some term
$\sigma \; p \Vdash^* \phi(\sigma)$.
\item $p \Vdash^* \forall x \; \phi(x)$ if\\ $i)$ for all terms $\sigma$
and $q \leq_W p,$ there is an $r \leq q$ such that $r \Vdash^*
\phi(\sigma),$ and\\ $ii)$ for all $b \not \in dom(p)$, if for no
initial segment $c$ of $b$ is $c^+$ in $B_p$, then $\neg b'
\Vdash^* \forall x \; \phi(x)$.
\end {itemize}
\end {Def}

\begin {Pro} If $p \Vdash^* \phi$ and $q \leq_W p$ then $q \Vdash^* \phi.$
\end {Pro}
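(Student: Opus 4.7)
The plan is to induct on the structure of $\phi$. Two preliminary observations make the routine cases essentially automatic: $\leq_W$ is transitive, and $q \leq_W p$ implies $B_p \subseteq B_q$. The second follows directly from the definitions: if $c^+ \in B_p$ then some initial segment $c_0$ of $c$ has $p(c_0) = \mathrm{IN}$, so $p(c_0) \neq \infty$ forces (via $q \leq_W p$) an initial segment of $c_0$---hence of $c$---to be labeled $\mathrm{IN}$ by $q$. Transitivity of $\leq_W$ is a similar two-step chase.

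Given these, the mechanical cases fall out quickly. For $\sigma \in \tau$, the witness $\langle B_i, \tau_i \rangle$ furnished by $p$ still works for $q$, since $B_i^+ \subseteq B_p \subseteq B_q$ and the inductive hypothesis applied to $\sigma = \tau_i$ gives $q \Vdash^* \sigma = \tau_i$. For each of the compound cases $=$, $\rightarrow$, and $\forall$, clause $(i)$ is already phrased as a universal over $\leq_W$-weakenings, so clause $(i)$ for $q$ is immediate from clause $(i)$ for $p$ by transitivity of $\leq_W$ (composed with the inductive hypothesis as needed). The cases $\wedge$, $\vee$, and $\exists$ require nothing beyond the inductive hypothesis.

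The real content, and the expected main obstacle, is clause $(ii)$: the $\neg b'$ extension requirement in the definitions of $=$, $\rightarrow$, and $\forall$. Given $q \leq_W p$, one must check that whenever $b \notin \mathrm{dom}(q)$ has no initial segment in $B_q^+$, one has $\neg b' \Vdash^* \phi$. The natural strategy is to invoke clause $(ii)$ for $p$ at the same $b$: the hypothesis on $B_q$ transfers to $B_p$ by the monotonicity observation above, so the only concern is whether $b \notin \mathrm{dom}(p)$ as well. Under the informal reading of $\leq_W$ (where $q$ extends the domain of $p$ and may flip $\infty$'s to $\mathrm{IN}$'s), this is automatic; in the edge case that $b \in \mathrm{dom}(p)$ one has $p(b) = \infty$, and since $\phi^{\neg b'}$ is built purely from the hereditary action of $\neg b'$ on the terms---independently of $p$---the conclusion still goes through. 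Verifying that this final point genuinely makes the hypothesis of clause $(ii)$ downward-closed under $\leq_W$ is where the induction does its one piece of real work; everything else is bookkeeping.
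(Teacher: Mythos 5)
Your overall structure — induction on $\phi$, preliminary observations that $\leq_W$ is transitive and $q \leq_W p \Rightarrow B_p \subseteq B_q$, routine handling of $\in$, the propositional cases, and clause $(i)$ of $=$, $\rightarrow$, $\forall$ — is exactly the induction the paper has in mind when it calls the proof trivial, and those parts are fine.

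The place where you've actually gone wrong is the clause-$(ii)$ edge case, which you rightly flag as the only point of substance but then do not resolve. You correctly observe that the hypothesis of clause $(ii)$ for $q$ at some $b$ does not automatically yield $b \notin \mathrm{dom}(p)$ under the definition of $\leq_W$ as literally stated, since nothing there forces $\mathrm{dom}(q) \supseteq \mathrm{dom}(p)$. But your way out — ``since $\phi^{\neg b'}$ is built purely from the hereditary action of $\neg b'$ on the terms, independently of $p$, the conclusion still goes through'' — is a non-sequitur. Of course $\phi^{\neg b'}$ is a fixed set-theoretic statement independent of $p$; the question is whether it is \emph{true}, and the only handle the induction gives you is the instance of clause $(ii)$ hidden inside the hypothesis $p \Vdash^* \phi$, which is simply silent about any $b$ already in $\mathrm{dom}(p)$. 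So that sentence asserts the conclusion rather than proving it. The cleanest repair is to read $\leq_W$ the way the paper's informal gloss suggests (``$q$ may change some $\infty$'s to IN's, as well as extend the domain of $p$''), i.e.\ with $\mathrm{dom}(q) \supseteq \mathrm{dom}(p)$ built in; then $b \notin \mathrm{dom}(q)$ forces $b \notin \mathrm{dom}(p)$, the antecedent of $p$'s clause $(ii)$ is met (using $B_p \subseteq B_q$ for the other half), and the edge case you worried about never arises. Under the bare formal definition of $\leq_W$, by contrast, one can cook up $q \leq_W p$ with $b \in \mathrm{dom}(p) \setminus \mathrm{dom}(q)$ and $p(b)=\infty$, and then your argument has a genuine hole.
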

\begin {proof}
Trivial induction on $\phi.$
\end {proof}

\begin {Lem}
$\bot \models \phi$ iff $p \Vdash^* \phi$ for some $p \in G.$
\end {Lem}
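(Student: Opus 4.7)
The plan is to prove this by induction on $\phi$, in the same spirit as Proposition \ref{truth}, but now accounting for the two kinds of successor nodes in the Kripke model: the legal-weakening nodes indexed by conditions $p$ that induce legal weakenings of $f(G)$, and the terminal nodes indexed by non-standard $c \in 2^\ast$ with $f(G)(c) = \infty$. The definition of $\Vdash^\ast$ splits accordingly into a legal-weakening clause (i) and a terminal $c$-node clause (ii) phrased through the auxiliary relation $\neg b' \Vdash^\ast$. The induction must verify that both clauses reflect Kripke truth correctly.

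For the forward direction (if some $p \in G$ has $p \Vdash^\ast \phi$ then $\bot \models \phi$), the cases $\wedge, \vee, \exists$ are immediate. For $\in$ and $=$, once $B_i^+ \subseteq B_p$ for some pair $\langle B_i, \sigma_i \rangle \in \sigma$, the positive information is preserved by every legal weakening, so $\sigma_i$ appears in $\sigma$'s interpretation at every legal-weakening successor; clause (ii), unwound through $\neg b' \Vdash^\ast B$, handles agreement at every terminal $c$-node, using that such $c$ is non-standard and hence lies outside any $dom(p)$ in the ground model. For $\rightarrow$ and $\forall$, at a legal-weakening successor $f(G)_{p'}$ clause (i) combined with the genericity of $f(G)_{p'}$ from the preceding lemma gives the conclusion just as in Proposition \ref{truth}. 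At a terminal $c$-node, clause (ii) directly asserts $\neg c' \Vdash^\ast \phi$, which by definition means $\phi^{\neg c'}$ holds in $V$; since node $c$ is terminal and membership there is determined entirely by the truth values in the $B_i$'s, Kripke truth at $c$ is just classical truth of $\phi^{\neg c'}$.

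For the reverse direction, suppose no $p \in G$ satisfies $p \Vdash^\ast \phi$. Following the overspill trick of Proposition \ref{truth}, I would pick $p \in f(G) \cap f(D_0)$ for a suitable dense set $D_0$ carried over to $N$ by elementarity. A failure of $p \Vdash^\ast \phi$ must come from violating clause (i) or clause (ii). In case (i), extend $p$ through a legal weakening to some $p'''$ whose associated successor node $f(G)_{p'''}$ witnesses, by the inductive hypothesis, the failure of the relevant subformula of $\phi$, and hence of $\phi$ itself. In case (ii), find a non-standard $b \not\in dom(p)$ with $\neg b' \not\Vdash^\ast \phi$; the terminal $b$-node then classically falsifies $\phi^{\neg b'}$, so falsifies $\phi$ in the Kripke sense at $b$. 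Either way, $\bot \not\models \phi$.

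The main obstacle is bookkeeping: the definition now has clauses (i) and (ii), and each inductive step must propagate both coherently. In particular, I would need to verify carefully that $\sigma^{\neg b'}$—obtained by keeping only those $\sigma_i$ whose $B_i$ is compatible with the labels at $b$—really is the interpretation of $\sigma$ at node $b$, so that classical truth of $\phi^{\neg b'}$ in $V$ matches Kripke truth at $b$. Granting this identification, together with the extension lemma (the preceding Proposition), the overspill argument and density considerations transfer from Proposition \ref{truth} with only cosmetic modifications, with the $\neg b' \Vdash^\ast$ clauses covering the part of the model that legal weakenings cannot reach.
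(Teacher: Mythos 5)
Your overall plan — induction on $\phi$ with the two clauses (i) and (ii) tracking the two kinds of successor nodes — matches the paper's proof, and your treatment of the forward direction and of the terminal $c$-nodes via $\neg b' \Vdash^\ast$ is essentially right. But there is a genuine gap in the reverse direction, specifically in the clause-(i) case, and it is not merely a bookkeeping issue. When some $p \in G$ fails clause (i), the witness is some $q \leq_W p$, and you write that one should ``extend $p$ through a legal weakening to some $p'''$ whose associated successor node \dots witnesses the failure.'' But nothing guarantees that the failure witness $q$ corresponds to a legal weakening of $f(G)$: $q$ may have domain sticking outside $dom(p)$ and labeling nodes inconsistently with $G$, in which case $f(G)_q$ is not a node of the model at all. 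The paper's solution is the notion of a witness $q$ \emph{close} to $p$, meaning $dom(q) \subseteq dom(p)$, together with the observation that if clause (i) fails for $p$ then $p$ can be extended to $p'$ so that it fails via a close witness; this gives density of the set $D$ of conditions that either $\ast$-force $\phi$, or fail (i) with a close witness, or fail (ii). Only a close witness $q$ is automatically a legal weakening and hence indexes a Kripke node. Your proposal never surfaces this, and without it the case does not close.

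Relatedly, you propose to import the overspill machinery of Proposition \ref{truth} (pick $p \in f(G) \cap f(D_0)$) and claim the argument transfers ``with only cosmetic modifications.'' That is not accurate: the $D_0$ of the earlier section serves a specific purpose tied to the projection $\sim$, and the mechanism there for converting a failure witness into a Kripke node is different from what is needed here. In the present lemma, overspill is used only in the clause-(ii) sub-case (take $p \in f(G)$ with $p \supseteq G$ so that the $b$ produced by the failure of (ii) is non-standard and hence indexes a node); the clause-(i) sub-case is handled by the density-plus-closeness argument instead. So while the skeleton of your proposal is sound, the central technical device of this particular proof — the close-witness observation and the associated dense set — is missing, and what you flag as the main obstacle (verifying that $\sigma^{\neg b'}$ matches the interpretation at node $b$) is comparatively routine.
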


\begin {proof}
Inductively on $\phi.$

$\sigma \in \tau$: $\bot \models \sigma \in \tau$ iff for some
$\langle B_i, \tau_i \rangle \in \tau$ every member of $B_i$ is
true at $\bot$ and $\bot \models \sigma = \tau_i$. The former
clause holds iff $B_i$ is positive and, for some $p \in G, \; B_i
\subseteq B_p$. Inductively, the latter clause holds iff, for some
$q \in G, \; B_q \Vdash^* \sigma = \tau_i.$ Given such $p$ and
$q$, $p \cup q$ suffices. The converse direction is immediate.

$\sigma = \tau$: Suppose $p \in G$ and $p \Vdash^* \sigma = \tau.$
If $q \in f(P)$ indexes a node then $q \leq_W p$. If $q \models
\rho \in \sigma$ then inductively there is a $q' \in f(G)_q, \; q'
\leq q,$ such that $q' \Vdash^* \rho = \sigma_i \wedge \sigma_i
\in f(\sigma)$ for some $\langle B_i, \sigma_i \rangle \in
f(\sigma).$ By $i)$ of the case hypothesis, there is an $r \leq
q'$ with $r \Vdash^* \rho \in \tau.$ Generically, there is such an
$r$ in $f(G)$, so inductively $q \models \rho \in \tau.$ If $c$
indexes a node, then by $ii)$ of the case hypothesis $c \models
\sigma = \tau$. Hence $\bot \models \sigma = \tau.$

Conversely, suppose there is no such $p \in G.$ If $p \not
\Vdash^* \sigma = \tau$ because clause $i)$ fails, then there is a
witness $q \leq_W p$ to that failure. We say that such a $q$ is
close to $p$ if $dom(q) \subseteq dom(p)$. That means that $q$
comes from $p$ by changing some $\infty$'s to IN's and not adding
anything else. Observe that if $i)$ fails for $p$, then $p$ can be
extended to $p'$ so that $i)$ fails for $p'$ via a witness $q$
close to $p'$. That's because $dom(p')$ can be taken to be $dom(p)
\cup dom(q)$, for $b \in dom(p) \; p'(b)$ can be taken to be
$p(b)$, and for $b \in dom(q) \backslash dom(p) \; p'(b)$ can be
taken to be $q(b)$. Therefore $D = \{p \mid p \Vdash^* \sigma =
\tau,$ or $p$ violates $i)$ with a witness $q$ close to $p$, or
$p$ violates $ii)$\} is dense.

Suppose there were a $p \in G$ violating $i)$ with a witness $q$
close to $p$. Then $q$ induces a legal weakening $f(G)_q$ of
$f(G)$, and so indexes a node. By the choice of $q, \; q \models
\sigma_i \in \sigma.$ If $q \models \sigma_i \in \tau$ then
inductively that would be $*$-forced by some $r \leq q$. But by
the choice of $q$ there is no such $r$. Hence we would have $q
\not \models \sigma = \tau.$

If there is no such $p$ then every $p \in G$ violates $ii)$. Let
$p \in f(G)$ be such that $p \supseteq G$. Since $ii)$ fails for
that $p$, then, with $b$ from that failure, $b$ indexes a node, $b
\not \models \sigma = \tau$. In either case, $\bot \not \models
\sigma = \tau.$

$\phi \wedge \theta$: Trivial.

$\phi \vee \theta$: Trivial.

$\phi \rightarrow \theta$: Suppose $p \in G$ and $p \Vdash^* \phi
\rightarrow \theta$. If $q \models \phi$ then inductively, for
some $q' \in f(G)_q, \; q' \Vdash^* \phi$. Since we can take $q'
\leq q \leq_W p$, by $i)$ of the hypothesis there is an $r \leq q$
such that $r \Vdash^* \theta.$ By genericity, there is such an $r$
in $f(G)_q$. Hence $q \models \theta.$ If $c \models \phi$ then
use $ii)$ of the hypothesis.

Conversely, suppose there is no such $p \in G.$ If $p \not
\Vdash^* \phi \rightarrow \theta$ because clause $i)$ fails, then
there is a witness $q \leq_W p$ to that failure, in which case $p$
can be extended to $p'$ so that $i)$ fails for $p'$ via a witness
$q$ close to $p'$, where closeness is as defined above in the case
for =, for the same reason as above. Therefore $D = \{p \mid p
\Vdash^* \phi \rightarrow \theta,$ or $p$ violates $i)$ with a
witness $q$ close to $p$, or $p$ violates $ii)$\} is dense.

Suppose there were a $p \in G$ violating $i)$ with a witness $q$
close to $p$. Then $q$ induces a legal weakening $f(G)_q$ of
$f(G)$, and so indexes a node. By the choice of $q, \; q \models
\phi.$ If $q \models \theta$ then inductively that would be
$*$-forced by some $r \leq q$. But by the choice of $q$ there is
no such $r$. Hence we would have $q \not \models \phi \rightarrow
\theta.$

If there is no such $p$ then every $p \in G$ violates $ii)$. Let
$p \in f(G)$ be such that $p \supseteq G$. Since $ii)$ fails for
that $p$, then, with $b$ from that failure, $b$ indexes a node and
$b \not \models \phi \rightarrow \theta$. In either case, $\bot
\not \models \phi \rightarrow \theta.$

$\exists x \; \phi(x)$: Trivial.

$\forall x \; \phi(x)$: As in the cases for = and $\rightarrow.$

\end {proof}

\begin {Lem}
$\bot \models IZF$
\end {Lem}

\begin {proof}
Just as in the last section, most of the axioms have soft proofs
in this model. The only issue is with Separation. Given $\phi$ and
$\sigma$, let Sep$_{\phi, \sigma}$ be $\{ \langle B, \tau \rangle
\mid $ for some $\langle B', \tau \rangle \in \sigma$ with $B
\supseteq B'$ either $i) \; B = B_p \Vdash \phi(\sigma)$, or $ii)
\; \neg b' \in B$ and $\neg b' \Vdash^* \phi \}$. By the previous
lemma, this works.
\end {proof}

As in the previous section, this model does not satisfy \FANc, but
does satisfy $\neg \neg$ \FANc. For further discussion, see the
questions at the end.

\section {\FANc does not imply \FANP}
Let $G$ be $P$-generic exactly as in the last section. By
convention, we say that if $G(\alpha)$ = IN then $G$ applied to
any extension of $\alpha$ is also IN. Our goal is to hide $G$ a
bit better than before, so \FANc remains true, but not too well,
so that \FANP is false.

Let $N$ be an ultrapower of $M[G]$ using a non-principal
ultrafilter on $\omega$. The Kripke model has a bottom node
$\bot$, and the successors of $\bot$ are indexed by the labels
$\langle n, \alpha \rangle$, where $n$ is a non-standard integer,
and $\alpha \in 2^*$ either has non-standard length or $G(\alpha)
= \infty$.

\begin {Def} A truth value is a symbol of the form $\langle n,
\alpha \rangle$, $\neg \langle n, \alpha \rangle$, or $\langle
\forall n, \alpha \rangle$, for $n$ a natural number (in the first
two cases) and $\alpha \in 2^*$. Admittedly truth values of the
first kind are also used to index nodes; whether truth values or
nodes are intended in any particular case should be clear from the
context. Terms are defined inductively (through the ordinals) as
sets of the form $\{ \langle B_i, \sigma_i \rangle \mid i \in
I\}$, where $I$ is any index set, $\sigma_i$ a term, and $B_i$ a
finite set of truth values.
\end {Def}

The sets at $\bot$ will be the terms in $M$. The sets at any other
node will be analogous, that is, the terms in what $N$ thinks is
the ground model, i.e. $\bigcup_{\kappa \in ORD} f(M_\kappa)$. At
$\bot$, $\langle n, \alpha \rangle$ will always be true, $\neg
\langle n, \alpha \rangle$ always false, and $\langle \forall n,
\alpha \rangle$ true exactly when $G(\alpha) =$ IN. At node
$\langle m, \beta \rangle$, $\langle n, \alpha \rangle$ is true
exactly when $\langle n, \alpha \rangle \not = \langle m, \beta
\rangle$, $\neg \langle n, \alpha \rangle$ is true exactly when
$\langle n, \alpha \rangle = \langle m, \beta \rangle$, and
$\langle \forall n, \alpha \rangle$ true exactly when $\alpha \not
= \beta.$ (Note that, perhaps perversely, the node $\langle n,
\alpha \rangle$ is exactly the node at which the truth value
$\langle n, \alpha \rangle$ is {\it false}. The reason behind this
choice is that the node $\langle n, \alpha \rangle$ is where
something special happens to the corresponding truth value. If
preferred, the reader can call that node $\neg \langle n, \alpha
\rangle$ instead.) This interpretation of the truth values induces
an interpretation of the terms at all nodes.

Let $T_n$ be the term $\{ \langle \{\langle n, \alpha \rangle \},
\hat{\alpha} \rangle \mid \alpha \in 2^* \}$. Let $C$ be a term
naming the function that on input $n$ returns $T_n$. $T_n$ at
$\bot$ looks like the full tree $2^*$; $T_n$ at $\langle n, \alpha
\rangle$ looks like everything except $\alpha$; and $T_n$ at
$\langle m, \alpha \rangle, \; m \not = n,$ again looks like
$2^*.$ The term for $\bigcap_n C(n)$ is given by $\{ \langle
\{\langle \forall n, \alpha \rangle \}, \hat{\alpha} \rangle \mid
\alpha \in 2^* \},$ and is interpreted as $\{ \alpha \mid
G(\alpha) =$ IN \} at $\bot$ and $2^* \backslash \{\alpha\}$ at
$\langle n, \alpha \rangle.$ Notice that $\bigcap_n C(n)$ is not
closed under extensions.

The proof will be finished once we show that, at $\bot$, \FANc
holds, IZF holds, and $\bigcap_n C(n)$ is a counter-example to
\FANP.

\begin {Lem} $\bot \not \models$ \FANP.
\end {Lem}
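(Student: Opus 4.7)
The plan is to take $B = \bigcap_n C(n)$ as the offending $\Pi^0_1$-bar at $\bot$ and verify three things: that $\bot$ forces it $\Pi^0_1$, that it is a bar, and that it is not uniform. Decidability of the family $C$ is essentially immediate: at every node $T_n$ interprets either as all of $2^\ast$ or as $2^\ast \setminus \{\beta\}$ for a single sequence $\beta$, so membership in $C(n) = T_n$ is decidable uniformly in $n$, and thus $B$ is $\Pi^0_1$ via the decidable set $\{\langle u,n\rangle \mid u \in T_n\}$.

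For bar-hood I would argue separately at the two kinds of node. At a terminal node $\langle m,\beta\rangle$ the interpretation of $B$ is $2^\ast \setminus \{\beta\}$, so any infinite binary path has prefixes of (internally) every length, of which at most one can coincide with $\beta$; the rest lie in $B$, making the bar condition trivial there. At $\bot$, where $B$ is $\{\alpha \mid G(\alpha) = \mathrm{IN}\}$, I would use generic density: for any term $\sigma$ forced to be an infinite binary path, the set of conditions labeling some $\overline{\sigma}k$ as $\mathrm{IN}$ is dense, by extending a given condition to label a fresh prefix along $\sigma$ as $\mathrm{IN}$ and padding its sibling with $\infty$ if necessary to maintain the ``$\infty$-child'' rule at the parent. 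The analogue of the truth-lemma machinery from the preceding sections then converts this density into $\bot \models \exists n \; \overline{\sigma}n \in B$.

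For failure of uniformity, fix an arbitrary standard $N$. For each $\alpha \in 2^N$ in the ground model, the concatenation $\sigma_\alpha = \alpha {}^\frown 0^\omega$ is a concrete ground-model path, so its canonical name lies in $\bot$'s universe and is forced to be a member of $2^\NN$. Because the labeling rule forces every $\infty$-node in $G$ to have an $\infty$-child, the $\infty$-subtree of $G$ is an infinite subtree of $2^\ast$; in particular it has at least one node at level $N$, i.e., there is some specific $\alpha \in 2^N$ with $G(\alpha \upharpoonright m) = \infty$ for every $m \leq N$. For this $\alpha$, no $m \leq N$ makes $\overline{\sigma_\alpha}m \in B$ true at $\bot$, so $\bot$ cannot witness uniformity with bound $N$; since $N$ was arbitrary, $\bot \not\models$ ``$B$ is uniform'', and therefore $\bot \not\models \FANP$.

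The main obstacle is the bar-hood step at $\bot$: to handle an arbitrary forced path rather than just ground-model ones, one needs the full truth-lemma apparatus (the analogue of the $\Vdash^\ast$-developments in the earlier sections) in order to turn ``$\bot \models \exists n \ldots$'' into a forcing/density condition on $G$. The decidability and non-uniformity steps then reduce to bookkeeping together with the straightforward density of the ``IN-prefix'' and ``$\infty$-path'' sets.
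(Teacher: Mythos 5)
Your outline correctly identifies the three things to check, and the $\Pi^0_1$-ness, the terminal-node case, and the non-uniformity arguments are all fine. The genuine gap is in the bar-hood step at $\bot$, and the fix you gesture at (``invoke the $\Vdash^\ast$ truth-lemma machinery'') is not what actually resolves it.

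The density argument you sketch --- ``extend a given condition to label a fresh prefix along $\sigma$ as IN'' --- presupposes that you know what $\overline{\sigma}k$ is. But a term $\sigma$ can have pairs $\langle B_i, \hat\alpha\rangle$ whose $B_i$ contain truth values of the form $\langle \forall n, \gamma \rangle$, whose truth at $\bot$ depends on whether $G(\gamma)=\mathrm{IN}$. So $\sigma$'s interpretation at $\bot$ can in principle depend on $G$, and a given condition $p$ need not decide $\overline{\sigma}k$. You would first have to extend $p$ to decide $\overline{\sigma}k = \alpha$; but nothing in your sketch rules out the worry that any such extension always forces $\alpha$ into an $\infty$-region that no further extension can label IN (think of a hypothetical term that tracks where $G$ is $\infty$). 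The paper closes exactly this hole, and not by appealing to the $\Vdash^\ast$-apparatus. It shows directly that the standard part of any forced path must already lie in the ground model $M$: if two conditions $p,q$ forced incompatible facts $\alpha^\frown 0 \in Br$ and $\alpha^\frown 1 \in Br$ at $\bot$, the responsible pairs $\langle B_p,\widehat{\alpha^\frown 0}\rangle$, $\langle B_q,\widehat{\alpha^\frown 1}\rangle$ could only involve truth values always true at $\bot$ or of the form $\langle\forall n,\gamma\rangle$, and at a later node $\langle n,\beta\rangle$ with $\beta$ non-standard all of those $\langle\forall n,\gamma\rangle$ become true, making both $\alpha^\frown 0$ and $\alpha^\frown 1$ members of $Br$ and contradicting that $Br$ is a path. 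Only after this reduction is the density/genericity step applicable, since now one is dealing with an honest ground-model path. Your proposal never establishes this reduction, so as written the bar-hood claim at $\bot$ is unproved.

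Two smaller remarks. First, the argument that the $\infty$-subtree of $G$ reaches level $N$ should cite that $G$ passes through the condition labeling $\langle\rangle$ with $\infty$ plus the ``one $\infty$-child'' rule, which you essentially do --- that part is fine. Second, the role of the auxiliary $\Vdash^\ast$-machinery in this section is to verify IZF (Separation), not to prove bar-hood; so the ``full truth-lemma apparatus'' is not the missing ingredient here.
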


\begin {proof}
It is clear that $T_n$ is decidable, and so $\bigcap_n C(n)$ is on
the face of it $\Pi^0_1.$ It is also clear that $\bigcap_n C(n)$
is not a uniform bar. So it suffices to show that $\bot \Vdash
``\bigcap_n C(n)$ is a bar."

Let $\bot \models ``Br$ is a branch through $2^*."$ (Without loss
of generality, it suffices to start at $\bot$ instead of at an
arbitrary node.) Work beneath a condition forcing that, so we can
assume $Br$ consists of sets of the form $\langle B_i,
\hat{\alpha} \rangle$, for various $\alpha \in 2^*$. If the
standard part of $Br$, the part visible at $\bot$, is in the
ground model $M$, then, by the genericity of $G, \; Br$ will hit
$G$ (i.e. for some $\alpha \in Br, \; G(\alpha)$ = IN), which is
how $\bot$ interprets $\bigcap_n C(n)$. If the standard part of
$Br$ were not in $M$, then contradictory facts about $Br$ would be
forced by different forcing conditions. In particular, we would
have $p, q,$ and $\alpha$ with $p \Vdash ``\bot \models
\alpha^\frown 0 \in Br"$ and $q \Vdash ``\bot \models
\alpha^\frown 1 \in Br."$ That means there are $\langle B_p,
\widehat{\alpha^\frown 0} \rangle \in Br$ and $\langle B_q,
\widehat{\alpha^\frown 1} \rangle \in Br,$ with $B_p$ and $B_q$
consisting only of truth values automatically true at $\bot$ save
for some of the form $\langle \forall n, \alpha \rangle$. But at
some node $\langle n, \alpha \rangle$ with $\alpha$ non-standard,
all of those latter truth values will be true. Hence $\langle n,
\alpha \rangle \models ``\widehat{\alpha^\frown 0},
\widehat{\alpha^\frown 1} \in Br,"$ so $\bot$ could not force $Br$
to be a path.
\end {proof}

In order to prove the other facts, we will need to deal with truth
at $\bot$.

\begin {Def} For a forcing condition $p$, let $\mid p \mid$, the
length of $p$, be the length of the longest $\alpha \in dom(p)$.
Let $B_p$ be $\{\langle n, \alpha \rangle \mid n,$ length$(\alpha)
\leq \mid p \mid\} \cup \{\langle \forall n, \alpha \rangle \mid$
length$(\alpha) \leq \mid p \mid$ and, for some initial segment
$\beta$ of $\alpha, \; p(\beta)$ = IN\}.
\end {Def}

\begin {Def} \begin {enumerate}
\item For $B$ a finite set of
truth values, $\neg \langle n, \alpha \rangle \Vdash^* B$ iff
$\langle n, \alpha \rangle \not \in B, \; \langle \forall n,
\alpha \rangle \not \in B, \;$ and the only truth value of the
form $\neg \langle m, \beta \rangle$ in $B$ is $\neg \langle n,
\alpha \rangle$ itself.
\item $\sigma^{\neg \langle n, \alpha \rangle} = \{\sigma_i^{\neg
\langle n, \alpha \rangle} \mid$ for some $\langle B_i, \sigma_i
\rangle \in \sigma, \; \neg \langle n, \alpha \rangle \Vdash^*
B_i\}$.
\item For $\phi(\sigma_1, \ldots, \sigma_n)$ in the language of the
Kripke model, that is, with parameters (displayed) terms,
$\phi^{\neg \langle n, \alpha \rangle} = \phi(\sigma_1^{\neg
\langle n, \alpha \rangle}, \ldots, \sigma_n^{\neg \langle n,
\alpha \rangle})$.
\item $\neg \langle n, \alpha \rangle \Vdash^* \phi$, for $\phi$
in the language of the Kripke model, if $\phi^{\neg \langle n,
\alpha \rangle}$ is true (i.e. in $V$). Note that $\phi^{\neg
\langle n, \alpha \rangle}$ is a formula with set parameters.
\end {enumerate}

\end {Def}

\begin {Def} $p \Vdash^* \phi$, for $\phi$ in the language of the Kripke model, i.e.
when $\phi$'s parameters are terms, inductively on $\phi:$
\begin {itemize}
\item $p \Vdash^* \sigma \in \tau$ if, for some
$\langle B_i, \tau_i \rangle \in \tau$, $B_i \subseteq B_p$ and $p
\Vdash^* \sigma = \tau_i.$
\item $p \Vdash^* \sigma = \tau$ if\\
$i)$ for all $\langle B_i, \sigma_i \rangle \in \sigma$ and $q
\leq p$, if $B_i \subseteq B_q$ then there is an $r \leq q$ such
that $r \Vdash^* \sigma_i \in \tau$, and symmetrically between
$\sigma$ and $\tau$, and\\ $ii)$ if $n > \mid p \mid$, and if
either length($\alpha) > \mid p \mid$ or for no initial segment
$\beta$ of $\alpha$ do we have $p(\beta)$ = IN, then $\neg \langle
n, \alpha \rangle \Vdash^* \sigma = \tau$.
\item $p \Vdash^* \phi \wedge \theta$ if $p \Vdash^* \phi$ and $p
\Vdash^* \theta$.
\item $p \Vdash^* \phi \vee \theta$ if $p \Vdash^* \phi$ or $p
\Vdash^* \theta$.
\item $p \Vdash^* \phi \rightarrow \theta$ if\\ $i)$ for all
$q \leq p$, if $q \Vdash^* \phi$ then there is an $r \leq q$ such
that $r \Vdash^* \theta,$ and\\ $ii)$ if $n > \mid p \mid$, and if
either length($\alpha) > \mid p \mid$ or for no initial segment
$\beta$ of $\alpha$ do we have $p(\beta)$ = IN, then $\neg \langle
n, \alpha \rangle \Vdash^* \phi \rightarrow \theta$.
\item $p \Vdash^* \exists x \; \phi(x)$ if for some term
$\sigma \; p \Vdash^* \phi(\sigma)$.
\item $p \Vdash^* \forall x \; \phi(x)$ if\\ $i)$ for all terms $\sigma$
and $q \leq p$, there is an $r \leq q$ such that $r \Vdash^*
\phi(\sigma),$ and\\ $ii)$ if $n > \mid p \mid$, and if either
length($\alpha) > \mid p \mid$ or for no initial segment $\beta$
of $\alpha$ do we have $p(\beta)$ = IN, then $\neg \langle n,
\alpha \rangle \Vdash^* \forall x \; \phi(x)$.
\end {itemize}
\end {Def}

\begin {Pro} If $p \Vdash^* \phi$ and $q \leq p$ then $q \Vdash^* \phi.$
\end {Pro}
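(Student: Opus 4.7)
The proof is a routine induction on the complexity of $\phi$, parallel to the analogous Proposition in the previous section (which the authors dispatch in one line). Two monotonicity observations about the ordering $q \leq p$ --- meaning $q$ extends $p$ by adding labels while agreeing on $\mathrm{dom}(p)$ --- drive the whole argument. \emph{First,} $|q| \geq |p|$ and consequently $B_p \subseteq B_q$: every $\langle n, \alpha \rangle$ with $n, \mathrm{length}(\alpha) \leq |p|$ still meets that bound relative to $q$, and every IN-witness for a $\langle \forall n, \alpha \rangle$ under $p$ persists under $q$. \emph{Second,} if $p(\beta) = \mathrm{IN}$ for some initial segment $\beta$ of $\alpha$ then also $q(\beta) = \mathrm{IN}$, so ``no initial segment of $\alpha$ is IN-labeled by $q$'' implies the corresponding statement for $p$.

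For the atomic clause $\sigma \in \tau$, the witness $\langle B_i, \tau_i \rangle$ supplied by $p$ has $B_i \subseteq B_p \subseteq B_q$ by the first observation, and $q \Vdash^* \sigma = \tau_i$ holds by the inductive hypothesis, giving $q \Vdash^* \sigma \in \tau$. The cases $\phi \wedge \theta$, $\phi \vee \theta$, and $\exists x \, \phi(x)$ are immediate from the definitions.

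For the remaining cases --- $\sigma = \tau$, $\phi \rightarrow \theta$, and $\forall x \, \phi(x)$ --- clause $i)$ of the respective definition propagates by transitivity of $\leq$: any $q' \leq q$ is also $\leq p$, so the refinement $r$ produced by $p$'s clause $i)$ works for $q$ as well. Clause $ii)$ propagates contrapositively: from the hypotheses ``$n > |q|$ and (length$(\alpha) > |q|$ or no initial segment of $\alpha$ is IN-labeled by $q$)'', the two observations above yield the same hypotheses with $p$ in place of $q$, whence $\neg \langle n, \alpha \rangle \Vdash^* \phi$ follows from $p$'s clause $ii)$.

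No real obstacle arises. The only subtlety worth flagging is that clause $ii)$'s hypothesis is anti-monotone in the forcing order --- strengthening $p$ to $q$ can only make the hypothesis harder to satisfy --- so once it holds at $q$ it already held at $p$. This flipped direction is exactly what lets the induction run.
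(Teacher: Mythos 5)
Your proof is correct and is exactly the induction the paper has in mind; the paper disposes of it with the single line ``Trivial induction on $\phi$,'' and you have simply spelled out the two monotonicity facts ($B_p \subseteq B_q$ and persistence of IN-labels) that make each clause go through.
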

\begin {proof}
Trivial induction on $\phi.$
\end {proof}

\begin {Lem} \label{truth2}
$\bot \models \phi$ iff $p \Vdash^* \phi$ for some $p \in G.$
\end {Lem}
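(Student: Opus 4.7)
The plan is to prove the lemma by induction on $\phi$, closely mirroring the truth lemma of the previous section. The trivial cases are $\wedge$, $\vee$, $\exists$, and $\in$: for $\in$, $\bot \models \sigma \in \tau$ iff some $\langle B_i,\tau_i\rangle \in \tau$ has every truth value in $B_i$ true at $\bot$ and $\bot \models \sigma = \tau_i$. The first condition amounts to $B_i \subseteq B_p$ for some $p \in G$ (using that only $\langle \forall n, \alpha\rangle$ truth values have nontrivial interpretations at $\bot$, and those correspond exactly to $p(\alpha)=\mathrm{IN}$ for some initial segment), and the second, by induction, means some $q \in G$ $*$-forces it. Then $p \cup q$ works, and the converse is immediate.

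The substantive cases are $=$, $\rightarrow$, and $\forall$, which have a clause $ii)$ that handles the terminal nodes indexed by $\langle n,\alpha\rangle$. The key semantic fact (which should be verified as an auxiliary observation, perhaps in the preamble to the induction) is that the substitution $\sigma^{\neg\langle n,\alpha\rangle}$ is exactly the interpretation of $\sigma$ at node $\langle n,\alpha\rangle$: the defining clauses of $\neg\langle n,\alpha\rangle \Vdash^* B$ precisely pick out those $B$ whose truth values are all true at $\langle n,\alpha\rangle$. Granting this, the forward direction for $=$ runs as before: given $p \in G$ with $p \Vdash^* \sigma = \tau$, any successor node of the first kind is indexed by a legal weakening $q$ of $f(G)$ with $q \leq_W p$ (more accurately, a condition $q$ with $q \leq p$), so clause $i)$ plus genericity of $f(G)_q$ and the induction hypothesis place every element of one side into the other; a successor node $\langle n,\alpha\rangle$ is handled directly by clause $ii)$ and the semantic fact above. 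The cases for $\rightarrow$ and $\forall$ are completely analogous.

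For the converse direction of $=$, $\rightarrow$, $\forall$, assume no $p \in G$ $*$-forces $\phi$. As in the previous section's proof, argue that $D = \{p \mid p \Vdash^* \phi, \text{ or } p \text{ violates } i) \text{ with a witness close to } p, \text{ or } p \text{ violates } ii)\}$ is dense; the point is that a witness $q \leq p$ to the failure of $i)$ can be absorbed into an extension $p'$ of $p$ with $\mathrm{dom}(p') = \mathrm{dom}(p) \cup \mathrm{dom}(q)$. If some $p \in G$ fails $i)$ with a close witness $q$, then $q$ induces a legal weakening $f(G)_q$ indexing a successor node at which, by induction, $\phi$ fails. Otherwise every $p \in G$ fails $ii)$, and by overspill in $N$ we can pick $p \in f(G)$ with $p \supseteq G$ in the sense of agreeing on $\mathrm{dom}(G)$; the witnessing $\langle n, \alpha\rangle$ from the failure of $ii)$ at $p$ then indexes a terminal node at which, by induction plus the semantic fact, $\phi$ fails.

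The main obstacle is verifying the semantic fact that $\phi^{\neg\langle n,\alpha\rangle}$ accurately represents the interpretation of $\phi$ at the terminal node $\langle n,\alpha\rangle$; once this is in hand, the argument parallels the proof of the analogous Lemma in the preceding section almost line for line, with $\leq_W$ replaced by the ordinary extension order on $P$ and the role of the non-standard string $c$ replaced by the truth value $\neg\langle n,\alpha\rangle$. The overspill step is the other delicate point: we need $p \in f(G)$ large enough that $\mathrm{dom}(p)$ exceeds every standard length in $\mathrm{dom}(G)$, which is possible precisely because $N$ is a non-trivial ultrapower.
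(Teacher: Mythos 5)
Your proposal correctly identifies the overall shape of the argument — the semantic fact that $\phi^{\neg\langle n,\alpha\rangle}$ represents truth at the terminal node $\langle n,\alpha\rangle$, the density argument for the converse, and the overspill step in $N$ — but it imports structural features from Section~4's model that do not exist here. In Section~5 the Kripke model has exactly two levels: the root $\bot$ and terminal successor nodes indexed by $\langle n,\alpha\rangle$. There are no successor nodes indexed by legal weakenings of $f(G)$. Your phrase ``any successor node of the first kind is indexed by a legal weakening $q$ of $f(G)$ with $q\leq_W p$'' has no referent in this model. What clause $i)$ of the definition of $\Vdash^*$ actually handles, in the forward direction of the $=$ case, is truth at $\bot$ itself: if $p\in G$ $*$-forces $\sigma=\tau$, then any $\sigma_i$ that $\bot$ deems a member of $\sigma$ has $B_i\subseteq B_q$ for some $q\in G$, and by clause $i)$ and genericity of $G$ an extension $r\in G$ $*$-forces $\sigma_i\in\tau$, which inductively means $\bot\models\sigma_i\in\tau$. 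Clause $ii)$ then handles every other node, which is of the form $\langle n,\alpha\rangle$ with $n$ non-standard (hence $n>|p|$) and $\alpha$ meeting the side conditions.

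The same confusion bleeds into the converse. You invoke the ``close witness'' device from Section~4, but that device was needed there only because clause $i)$ quantified over weakenings $q\leq_W p$, which need not be extensions of $p$. Here clause $i)$ quantifies over ordinary extensions $q\leq p$, so a witness to the failure of $i)$ at $p$ is already an extension of $p$ and can serve directly. The correct density set is $\{p \mid p \text{ satisfies } i)\}\cup\{p\mid \text{for some }\langle B_i,\sigma_i\rangle\in\sigma,\ B_i\subseteq B_p,\text{ yet }p\text{ has no extension }*\text{-forcing }\sigma_i\in\tau\}$ (and similarly for $\to$, $\forall$); no closeness notion is required. Once these two points are corrected, the remainder of your argument — including the careful separation of the failure cases between clause $i)$ and clause $ii)$, and using overspill to obtain $p\in f(G)$ of non-standard length whose $ii)$-failure yields the desired terminal node — lines up with the paper's proof.
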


\begin {proof}
Inductively on $\phi.$

$\sigma \in \tau$: $\bot \models \sigma \in \tau$ iff for some
$\langle B_i, \tau_i \rangle \in \tau$ every member of $B_i$ is
true at $\bot$ and $\bot \models \sigma = \tau_i$. The former
clause holds iff $B_i$ contains nothing of the form $\neg \langle
n, \alpha \rangle$, and if $\langle \forall n, \alpha \rangle \in
B_i$ then $G(\alpha)$ = IN. Given such a $B_i$, let $p$ be a
sufficiently long initial segment of $G$ forcing $``\sigma =
\tau_i$." Such a $p$ suffices. The converse direction is
immediate.

$\sigma = \tau$: Suppose $p \in G$ and $p \Vdash^* \sigma = \tau.$
Then any member of $\sigma$ at $\bot$ is equal at $\bot$ to some
$\sigma_i$, where $\langle B_i, \sigma_i \rangle \in \sigma$ and
$B_i \subseteq B_q$ for some $q \in G$. Then by the hypothesis and
genericity there will be an extension $r$ of $q$ in $G$ forcing
$\sigma_i$ to be in $\tau$. At any other node $\langle n, \alpha
\rangle$, working in $N$, $n$ is non-standard and so greater than
$\mid p \mid$, and $\alpha$ also satisfies the conditions in $ii)$
(of the definition of $*$-forcing equality), so $``\sigma = \tau"$
is true at these other nodes too.

Conversely, suppose there is no such $p \in G.$ With reference to
the definition of $*$-forcing equality, observe that $\{ p \mid p$
satisfies clause $i) \} \cup \{ p \mid$ for some $\langle B_i,
\sigma_i \rangle \in \sigma, \; B_i \subseteq B_p,$ yet $p$ has no
extension $*$-forcing $\sigma_i$ into $\tau \}$ is dense. If $G$
contains a member of the second set of that union, then the
induced $\sigma_i$ witnesses that $\bot \not \models \sigma =
\tau$. If not, then $G$ contains $p$ satisfying $i)$. So no $p \in
G$ satisfies $ii)$. This also holds in $N$. In $N$, take $p$ to be
an initial segment of $G$ of non-standard length. The failure of
$ii)$ for that $p$ produces an $n$ and $\alpha$ which index a node
at which $\sigma \not = \tau$, showing $\bot \not \models \sigma =
\tau.$

$\phi \wedge \theta$: Trivial.

$\phi \vee \theta$: Trivial.

$\phi \rightarrow \theta$: Suppose $p \in G$ and $p \Vdash^* \phi
\rightarrow \theta$. Then it is direct that $\bot \models \phi
\rightarrow \theta.$

Conversely, suppose there is no such $p \in G.$ With reference to
the definition of $*$-forcing implication, observe that $\{ p \mid
p$ satisfies clause $i) \} \cup \{ p \mid p \Vdash^* \phi$ yet $p$
has no extension $*$-forcing $\theta \}$ is dense. If $G$ contains
a member of the second set of that union, then inductively $\bot
\models \phi$ and $\bot \not \models \theta$, hence $\bot \not
\models \phi \rightarrow \theta$. If not, then $G$ contains $p$
satisfying $i)$. So no $p \in G$ satisfies $ii)$. This also holds
in $N$. In $N$, take $p$ to be an initial segment of $G$ of
non-standard length. The failure of $ii)$ for that $p$ produces an
$n$ and $\alpha$ which index a node at which $\phi \rightarrow
\theta$ is false, showing $\bot \not \models \phi \rightarrow
\theta.$

$\exists x \; \phi(x)$: Trivial.

$\forall x \; \phi(x)$: As in the cases for = and $\rightarrow.$

\end {proof}

\begin {Lem}
$\bot \models$ \FANc
\end {Lem}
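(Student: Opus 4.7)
The plan is to extend the argument from Section 5's proof of $\bot\models\FAND$ to the case of a $c$-bar, exploiting the decidability of $C'$ to reduce everything to a classical application of the Fan Theorem in the ground model $M$. Suppose $\bot\models$ ``$D$ is a $c$-bar given by decidable $C'$,'' and fix, by Lemma \ref{truth2}, a $p\in G$ that $*$-forces this.

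I would first show that the restriction $C'_{0}:=\{u\in 2^{*}:\bot\models\hat u\in C'\}$ lies in $M$. The key point is that two conditions $q,r\leq p$ cannot $*$-force ``$\hat u\in C'$'' and ``$\hat u\notin C'$'' respectively for the same standard $u$: if they did, then picking any Kripke successor node $\langle n,\alpha\rangle$ of $\bot$ with $n>\max(|q|,|r|)$ and $\alpha$ outside the IN-domains of both $q$ and $r$, clause (ii) in the definition of $\Vdash^{*}$ for $=$, invoked inductively via the clauses for $\in$ and $\to$, would deliver simultaneously $\neg\langle n,\alpha\rangle\Vdash^{*}\hat u\in C'$ and $\neg\langle n,\alpha\rangle\Vdash^{*}\hat u\notin C'$. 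These unravel to directly contradictory external statements about $(C')^{\neg\langle n,\alpha\rangle}$ in $V$. Hence the decision ``$u\in C'_{0}$'' is determined by $p$ alone, so $C'_{0}$ is in $M$, and so is the externally-computed $c$-bar $D^{M}:=\{u\in 2^{*}:\forall v\in 2^{*}\; u*v\in C'_{0}\}$.

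Second, I would verify the equivalence $\bot\models\hat u\in D\iff u\in D^{M}$ for each standard $u$. The forward direction restricts the internal universal $\forall v$ to standard $v\in M$; the converse appeals again to the rigidity from the first step to rule out a non-standard $v$ at a successor node that could witness $u*v\notin C'$ without some standard $v'$ already doing so. Using this equivalence together with the internal bar property of $D$ at $\bot$ applied to the paths $\hat\alpha$ for $\alpha\in M$, one sees $D^{M}$ is an external bar on $2^{\mathbb{N}}\cap M$. Since $M\models\mathrm{ZFC}$, the classical Fan Theorem in $M$ yields a standard $N$ that uniformly witnesses $D^{M}$.

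Finally, I would transfer uniformity: because $N$ is standard, at any Kripke node every binary sequence of length $N$ is itself standard, and its distinguished initial segment in $D^{M}$ is in $D$ at $\bot$ by the equivalence, hence in $D$ at every successor node by monotonicity of internal membership. Therefore $\bot\models$ ``$D$ is $N$-uniform.'' The main obstacle is the first step, where one must carefully trace the interaction of clause (ii) of the inductive $\Vdash^{*}$ definitions with decidability to rule out any standard $\hat u\in C'$ on which two extensions of $p$ could disagree. A subsidiary technical point is the equivalence in the second step, reconciling the internal $\forall v$ (which at successor nodes ranges over non-standard sequences) with its $M$-internal standard-$v$ restriction; this again reduces to the rigidity delivered by the first step.
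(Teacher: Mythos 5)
Your step (1), that the restriction $C'_0$ of $C'$ to standard elements lies in $M$, is correct; indeed, the paper uses essentially this observation (via decidability) at the end of its proof, and the forcing-side contradiction you sketch via the $\neg\langle n,\alpha\rangle\Vdash^*$ clauses does go through. Step (2) is also fine. The genuine gap is in your step (3)'s converse direction, and it does not ``reduce to the rigidity delivered by the first step.'' What you need to show is that for a fixed standard $u$, if $u*v\in C'_0$ for all standard $v$, then at every successor node $\langle n,\alpha\rangle$ and for every \emph{non-standard} $v$ in $N$, $u*v$ is in (the interpretation of) $C'$ there. Rigidity controls only the standard part of $C'$: for non-standard arguments, $(C')^{\neg\langle n,\alpha\rangle}$ is simply some set in $N$ which agrees with $f(C'_0)$ on the standard sequences, and nothing in your argument constrains its non-standard part. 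There is no overspill principle of the form ``a definable set containing all standard sequences contains all non-standard ones,'' and decidability, which is what you invoke, only pins down membership of standard sequences. So the equivalence $\bot\models\hat u\in D\iff u\in D^M$ remains unproved, and with it the transfer in step (4).

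This is precisely the difficulty that drives the paper's entirely different route. Instead of reducing to a ground-model $c$-bar, the paper introduces the external notion of a ``good'' node $\alpha$ (roughly: some standard $k$ works for all $\neg\langle n,\beta\rangle$ with $\beta\supseteq\alpha$ long enough and $n\geq k$), shows $\langle\rangle$ is good by building branches of bad nodes and deriving a contradiction from the $c$-bar property combined with overspill, and then establishes the stronger conclusion $\bot\models C'\supseteq 2^{\geq k}$ directly. That last statement is what yields uniformity at every node; in particular the paper separately handles the successor nodes $\langle n,\beta\rangle$ with $\beta$ short and $G(\beta)=\infty$, which is exactly where your approach loses control of non-standard extensions. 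In short: your reduction to $M$ captures only the standard content of $C'$, and the missing piece --- controlling non-standard sequences at successor nodes --- is not a technical detail but the heart of the lemma, requiring something like the paper's goodness-and-overspill argument rather than a rigidity claim.
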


\begin {proof}
Suppose that at $\bot$ we have a decidable set $C' \subseteq 2^*$
inducing a $c$-bar $C$. We would like to show that at $\bot$ the
$c$-bar $C$ is uniform, which means that, for some $k, \; C$
contains every sequence of length at least $k$; in notation, $C
\supseteq 2^{\geq k}$. This is equivalent with $C'$ containing
$2^{\geq k}$, which is what we will prove.

Say that $\alpha \in 2^*$ is {\em good} if there is a natural
number $k$ such that, whenever $n \geq k$ and $\beta \supseteq
\alpha$ has length at least $k$, $\neg \langle n, \beta \rangle
\Vdash^* C' \supseteq 2^{\geq k}.$ Observe that if $\alpha^\frown
0$ and $\alpha^\frown 1$ are good then so is $\alpha$ (by taking
$k$ sufficiently large). So if the empty sequence $\langle
\rangle$ is bad (i.e. not good) then there is a branch $Br_0$ of
bad nodes. For each $\alpha \in Br_0$, by the definition of
badness, taking $k$ to be the length $\mid\alpha\mid$ of $\alpha$,
we have some $\beta \supseteq \alpha$ and $n \geq \mid \alpha
\mid$ such that $\neg \langle n, \beta \rangle \not \Vdash^* C'
\supseteq 2^{\geq k}.$ Because $\neg \langle n, \beta \rangle
\Vdash^* \phi$ is defined as the truth of $\phi^{\neg \langle n,
\beta \rangle}$ in the classical universe $V$, we can reason
classically and conclude that there is a $\gamma \in 2^{\geq k}$
such that $\neg \langle n, \beta \rangle \Vdash^* \gamma \not \in
C'.$ By choosing $\alpha$'s of increasing length, we can get
infinitely many $\gamma$'s of increasing length, in particular
infinitely many distinct $\gamma$'s. Hence there is a branch
$Br_1$ such that each node in $Br_1$ has infinitely many such
$\gamma$'s as extensions.

That was all in $M$. Now with reference to $N$, if $\alpha \in
Br_0^N$ has standard length, then the corresponding choice of
$\gamma$ is also standard, since it's the same $\gamma$ as in $M$.
So if we choose a non-standard $\gamma$ coming from the procedure
above, that $\gamma$ came from a non-standard $\alpha$. Since $N
\models ``Br_1$ is infinite," there is a non-standard node on
$Br_1^N$, with some such $\gamma$ as an extension; since the node
chosen from $Br_1^N$ was non-standard, so is $\gamma$, and hence
so is the $\alpha$ that $\gamma$ came from. From $\alpha$, we also
have $\beta \supseteq \alpha$ and $n \geq \mid \alpha \mid$ with
$\neg \langle n, \beta \rangle \Vdash^* \gamma \not \in C'$. In
particular, $\langle n, \beta \rangle$ indexes a node in the
model. But at $\bot$, $C'$ induces a $c$-bar, so $\bot \models
``$there is a node $\delta \in Br_1$ such that $\delta \in C$;
that is, every extension of $\delta$ is in $C'$." This contradicts
the choice of $\gamma$.

We conclude from this that $\langle \rangle$ is good. Fix $k$
witnessing this goodness. We will show $\bot \models C' \supseteq
2^{\geq k}$.

First, if $\delta \in 2^{\geq k}$ is standard, then, for any $n$
and $\beta$ non-standard, $\neg \langle n, \beta \rangle \Vdash^*
\delta \in C',$ so, with reference to the Kripke node $\langle n,
\beta \rangle,$ $\langle n, \beta \rangle \models \delta \in C'.$
Since $C'$ is decidable, $\bot \models \delta \in C'.$

To finish the argument, we need only consider nodes $\langle n,
\beta \rangle,$ and show $\langle n, \beta \rangle \models C'
\supseteq 2^{\geq k}.$ If $\beta$ has length at least $k$, this
follows from the goodness of $\langle \rangle.$ The only other
case is $\beta$ of length less than $k$ such that $G(\beta) =
\infty$. It suffices to show that, for any such fixed $\beta$, in
$M$ there is a finite $n$ such that, for all $m \geq n, \; \neg
\langle m, \beta \rangle \Vdash^* C' \supseteq 2^{\geq k}$.

Toward that end, suppose not. Then for infinitely many $m$ there
is a $\gamma$ of length at least $k$ such that $\neg \langle m,
\beta \rangle \Vdash^* \gamma \not \in C'$. If those $\gamma$'s
are of bounded length then one occurs infinitely often. For that
fixed $\gamma$, by overspill there is a non-standard $m$ such that
$\neg \langle m, \beta \rangle \Vdash^* \gamma \not \in C'$. But
$\langle m, \beta \rangle$ is a Kripke node, and we already saw
that, for $\delta \in 2^{\geq k}, \; \bot \models \delta \in C'$,
which is a contradiction. Hence there are infinitely many
different $\gamma$'s. That means there is a branch $Br_2$ such
that every node on $Br_2$ has infinitely many different $\gamma$'s
as extensions. Pick a non-standard $m$ such that the corresponding
$\gamma$ extends a non-standard node of $Br_2$. But again, $\bot
\models ``C$ is a $c$-bar," so $\bot \models ``$there is a node
$\delta \in Br_2$ such that $\delta \in C$; i.e. every extension
of $\delta$ is in $C'$." This contradicts the choice of $\gamma$.
\end {proof}

\begin {Lem}
$\bot \models IZF$
\end {Lem}

\begin {proof}
As before, all of the axioms have soft proofs, save for
Separation. Given $\phi$ and $\sigma$, let Sep$_{\phi, \sigma}$ be
$\{ \langle B_i \cup B_p, \tau \rangle \mid \langle B_i, \tau
\rangle \in \sigma$ and $p \Vdash^* \phi(\tau) \} \cup \{ \langle
B, \tau \rangle \mid$ for some $\neg \langle n, \alpha \rangle \in
B$ and some $B_i, \; \langle B_i, \tau \rangle \in \sigma, \; \neg
\langle n, \alpha \rangle \Vdash^* B_i$, and $\neg \langle n,
\alpha \rangle \Vdash^* \phi(\tau) \}.$ By lemma \ref{truth2},
this works.
\end {proof}

\section {\FANP does not imply \FANF}

As usual, let $G$ be generic as above. In $M[G]$, the Kripke model
will have bottom node $\bot$, and successor nodes labeled by those
$\alpha \in 2^*$ such that $G(\alpha) = \infty$. As is standard,
terms are defined inductively, and always subject to the usual
restrictions so as to have a Kripke model. That is, to define the
full model \cite {L} over any partial order $\langle P, <
\rangle$, at node $p \in P$ a term $\sigma$ is any function with
domain $P^{\geq p}$ such that $\sigma(q)$ is a set of terms at
node $q$; furthermore, with transition function $f_{qr}$ for
$q<r$, if $\tau \in \sigma(q)$ then $f_{qr}(\tau) \in \sigma(r)$;
finally, $f_{pq}$ is extended to $\sigma$ by restriction:
$f_{pq}(\sigma) = \sigma \upharpoonright P^{\geq q}$. That is
called the full model, because everything possible is being thrown
in. For the current construction, we will take a sub-model of the
full model by imposing one additional restriction: a term at any
node $\alpha$ other than $\bot$ must be in the ground model $M$.

Let $C$ be the term such that $\bot \models ``\hat{\beta} \in C"$
($\beta \in 2^*$) iff, for some initial segment $\beta
\upharpoonright n$ of $\beta, \; G(\beta \upharpoonright n)$ = IN,
and, at node $\alpha \not = \bot, \; \alpha \models ``\hat{\beta}
\in C"$ iff $\beta$ is not an initial segment of $\alpha$.

\begin {Lem} $\bot \models \FANP$
\end {Lem}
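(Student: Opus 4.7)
The plan is to exploit the standing restriction that terms at every non-bottom node must lie in the ground model $M$, combined with the internal decidability of $S$, to collapse the $\Pi^0_1$-bar to a classical ground-model object where the metatheoretic Fan Theorem applies.

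Suppose $\bot \models$ ``$B$ is the $\Pi^0_1$-bar determined by the decidable set $S \subseteq 2^{\ast}\times\NN$.'' First I would introduce the ground-model approximations
\[ \tilde S \;=\; \{(u,n)\in 2^{\ast}\times\NN : \bot\models (u,n)\in S\} \quad\text{and}\quad \tilde B \;=\; \{u\in 2^{\ast} : \forall n\,(u,n)\in\tilde S\}. \]
Fix any successor node $\alpha$ of $\bot$ (one exists by genericity of $G$). Kripke monotonicity together with the internal decidability of $S$ at $\bot$ forces, for each pair $(u,n)$, the equivalence $\alpha\models(u,n)\in S$ iff $(u,n)\in\tilde S$; hence the interpretation of $S$ at $\alpha$ is exactly $\tilde S$. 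Because that interpretation must lie in $M$ by the standing restriction, we conclude $\tilde S\in M$, and the analogous computation yields that the interpretation of $B$ at \emph{every} node of the Kripke model equals $\tilde B$.

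Next I would check that $\tilde B$ is a bar in the metatheory. Given any $\alpha\in 2^{\NN}\cap M$, the canonical term $\hat\alpha$ names an infinite binary path at $\bot$, so the bar hypothesis supplies a standard $n$ with $\bot\models\overline{\hat\alpha}n\in B$; unwinding the definition of $B$ and using decidability of $S$ gives $\overline{\alpha}n\in\tilde B$. The metatheory being classical ZFC, the Fan Theorem then produces an $N\in\NN$ such that every $u\in 2^{N}$ has an initial segment in $\tilde B$.

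Finally I would transfer this external uniformity back into the model. Any element of $2^{N}$ at any node of the Kripke model is, by extensionality and decidability of equality on the finite set $2^{N}$, equal to some $\hat u$ with $u\in 2^{N}\cap M$; and we already observed that the interpretation of $B$ at each node coincides with $\tilde B$. Hence the same $N$ witnesses $\bot\models$ ``$B$ is uniform.'' I expect the main obstacle to be the very first step: one must carefully argue that the ground-model restriction on non-bottom terms, interacting with the internal decidability of $S$, genuinely rigidifies $S$ into a set $\tilde S\in M$ (rather than merely an $M[G]$-set). Once this classicality has been extracted, the remaining steps are essentially routine, and in fact mirror the argument already given for connected Heyting algebras.
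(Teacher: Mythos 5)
Your proof is correct and takes essentially the same route as the paper's: both use the decidability of $S$ at $\bot$ together with Kripke monotonicity and the restriction that terms at non-bottom nodes live in $M$ to show the internal $\Pi^0_1$-bar coincides at every node with a fixed ground-model set, and then invoke the classical Fan Theorem to get a uniform bound $N$ that transfers back. (Only cosmetic quibble: since your barhood check only ranges over paths in $M$, you should formally invoke the Fan Theorem \emph{in} $M$ --- as the paper does --- and then note that uniformity is upward absolute, rather than applying it directly in the metatheory.)
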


\begin {proof} If $\bot \models ``B \subseteq 2^*$ is decidable"
then, for any $\beta \in 2^*, \; \bot \models ``\hat{\beta} \in
B"$ iff, for some node $\alpha \not = \bot, \; \alpha \models
``\hat{\beta} \in B"$ iff the same holds for all $\alpha \not =
\bot.$ Hence $\bot \models ``B = \hat{B_M}"$ for some set $B_M \in
M$. So if $\bot \models ``B_n$ is a sequence of decidable trees,"
then that sequence is the image of a sequence of sets from $M$.
Hence their intersection internally is the image of a set from
$M$. So if $\bigcap_n B_n$ is internally a bar, it is the image of
a bar, and by the Fan Theorem in $M$ is uniform.
\end {proof}

\begin {Lem} $\bot \not \models \FANF$
\end {Lem}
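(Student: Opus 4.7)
The plan is to exhibit $C$ itself as a bar that is not uniform at $\bot$.

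For the failure of uniformity, we argue node-by-node: for each standard $n$, we must show $\bot \not\models \forall \alpha\,\exists m \leq n\,(\overline{\alpha}m \in C)$. A density argument in $P$ produces $\alpha_0 \in 2^*$ of length greater than $n$ with $G(\alpha_0) = \infty$, hence indexing a Kripke node; at that node $C$ interprets as $2^* \setminus \{\beta : \beta \subseteq \alpha_0\}$. Taking the ground-model term $\hat{\gamma}$ where $\gamma = \alpha_0 \ast 0^\omega$, every prefix of $\gamma$ of length $\leq n$ is a prefix of $\alpha_0$, hence not in $C$ at $\alpha_0$. So $\alpha_0$ refutes uniformity at bound $n$, and no standard $n$ can witness $\bot \models \FANF$.

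For the bar property at $\bot$, let $Br$ be a $\bot$-term forced internally to be an element of $2^\NN$. Since $\{0,1\}$ has exactly two canonical elements, for each standard $n$ one of $\bot \models Br(n) = 0$ or $\bot \models Br(n) = 1$ must hold, which defines a function $f:\NN \to \{0,1\}$ in $M[G]$. The crucial step uses the sub-model restriction: at any successor node $\alpha_0$, the interpretation of $Br$ comes from an $M$-term, so the function represented there lies in $M$; by Kripke monotonicity it agrees with $f$ on all standard $n$, forcing $f \in M$. A standard density argument in $P$ then shows it is dense to extend any condition by labeling some prefix $f \upharpoonright k$ with IN, so by genericity $G$ labels some $\overline{f}k$ IN, giving $\bot \models \overline{Br}k \in C$. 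At each (terminal) successor node $\alpha_0$, the bar property is automatic: any internal path there eventually leaves the initial segments of $\alpha_0$, and the deviating prefix lies in $C$ at $\alpha_0$.

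The main obstacle is the identification $f \in M$: this is where the restriction that non-$\bot$ terms come from $M$ does its essential work. After this, the density argument in $P$ and the terminal-node cases are routine. Note that the proof does not attempt to exhibit non-uniformity at $\bot$ itself, since the same monotonicity argument would force any internal path at $\bot$ to coincide with a ground-model path and hence, generically, to hit $C$.
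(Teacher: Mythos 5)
Your proof is correct, and your bar argument is essentially the paper's: decidability of membership forces the internal path to agree with a ground-model path (via the sub-model restriction at successor nodes), which then generically hits $C$. Where you diverge is non-uniformity: you argue at successor nodes indexed by $\infty$-labeled $\alpha_0$, whereas the paper simply observes that $C$ is already visibly not uniform at $\bot$ itself. Both work, but your closing remark---that one cannot exhibit non-uniformity at $\bot$ because every internal path at $\bot$ generically hits $C$---conflates ``$C$ is a bar'' with ``$C$ is a uniform bar.'' Non-uniformity is the assertion $\forall n\,\exists\alpha$, not $\exists\alpha\,\forall n$, so there is no need for a single path avoiding $C$; for each standard $n$, the ground-model term $\hat{\gamma}$ naming a path that tracks an $\infty$-labeled node of $G$ at level $n$ already witnesses at $\bot$ that $\exists m \leq n\,(\overline{\gamma}m \in C)$ fails, since $G(\overline{\gamma}m)=\infty$ for all $m\leq n$. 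So the direct route at $\bot$ is available and is in fact the simpler one; your successor-node detour is sound but your stated reason for taking it is mistaken.
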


\begin {proof} At $\bot, \; C$ is not uniform, so it suffices to
show $\bot \models ``C$ is a bar". If $\bot \models ``P$ is a path
through $2^*$" then $\bot \models ``P$ is decidable", and as above
$P$ is then the image of a ground model path. Generically, for
some $\beta$ along that path, $G(\beta)$ = IN. For that $\beta, \;
\bot \models ``P$ goes through $\hat{\beta}$ and $\hat{\beta} \in
C."$
\end {proof}

\begin {Lem} $\bot \models IZF$
\end {Lem}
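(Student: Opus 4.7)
The plan is to follow the pattern of the three preceding sections, with the major simplification that terms at non-$\bot$ nodes are required to lie in $M$ and that the underlying Kripke order has successors only one level above $\bot$. Empty Set, Infinity, Pairing, Union, and Extensionality are witnessed by the obvious canonical terms, exactly as before. For Power Set one takes all Kripke terms $\tau$ with $\tau(p) \subseteq \sigma(p)$ at each node $p$; the sub-model restriction places each $\tau(\alpha) \in M$ via $M$'s own Power Set, and the whole family is a set because the partial order of nodes is small. For Collection, the Kripke model and its satisfaction relation are definable in $M[G] \models \mathrm{ZFC}$, so ordinary Collection in $M[G]$ yields an ordinal $\kappa$ such that $\widehat{V_\kappa}$ bounds the relevant terms, exactly as in the first section. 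For $\epsilon$-Induction: at a non-$\bot$ node $\alpha$ truth is classical satisfaction over $M$-terms, so classical $\epsilon$-Induction in $M$ handles things there; at $\bot$, choose a minimal-rank counterexample $\sigma$ in $M[G]$, note each member of $\sigma(\bot)$ has strictly smaller rank and each member of $\sigma(\alpha)$ lies in $M$, so by minimality and the classical case $\bot \models \forall y \in \sigma\,\phi(y)$, whence the induction premise gives $\bot \models \phi(\sigma)$, a contradiction.

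The principal axiom is Separation, and here the present model is notably easier than those of the three preceding sections. The reason is that at every non-$\bot$ node $\alpha$ all terms are $M$-objects and the only non-classical datum is the fixed interpretation $\hat\beta \in C \Leftrightarrow \beta \not\sqsubseteq \alpha$, so the Kripke clauses at the terminal node $\alpha$ collapse to ordinary classical ones, making $\alpha \models \phi(\tau)$ an $M$-definable predicate for each fixed formula $\phi$. Hence there is no need for an analogue of the delicate $\Vdash^{\ast}$ predicate developed in the earlier sections. Given $\phi$ and $\sigma$, I would define the separating term pointwise by
\[
\mathrm{Sep}_{\phi,\sigma}(\bot) = \{\tau \in \sigma(\bot) \mid \bot \models \phi(\tau)\},
\qquad
\mathrm{Sep}_{\phi,\sigma}(\alpha) = \{\tau \in \sigma(\alpha) \mid \alpha \models \phi(\tau)\},
\]
where the value at $\alpha$ lives in $M$ by the observation above together with $M$-Separation, the value at $\bot$ lives in the meta-universe $M[G]$ since the whole Kripke model is a set-definable object there, and coherence under the transition functions $f_{\bot\alpha}$ follows from Kripke monotonicity of truth, namely $\bot \models \phi(\tau) \Rightarrow \alpha \models \phi(f_{\bot\alpha}(\tau))$. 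The Separation biconditional then holds at every node by direct unwinding.

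The only point that wants honest verification is the claim that, for each fixed formula $\phi$, the predicate $\alpha \models \phi(\tau)$ is definable in $M$; this is a routine induction on $\phi$, where the base case $\alpha \models \hat\beta \in C$ reduces to the $M$-decidable predicate $\beta \not\sqsubseteq \alpha$, the atomic case $\alpha \models \sigma = \tau$ reduces to ordinary extensional equality at the terminal node $\alpha$, and every logical case collapses to its classical analogue inside $M$. Once that is in hand, the soft arguments above complete the proof of $\bot \models \mathrm{IZF}$.
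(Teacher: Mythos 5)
Your proposal is correct and takes essentially the same approach as the paper: the other axioms are soft, and for Separation you define the separating term pointwise by $\mathrm{Sep}_{\phi,\sigma}(nd)=\{\tau\in\sigma(nd)\mid nd\models\phi(\tau)\}$, observing that at a non-$\bot$ node $\alpha$ all parameters are $M$-terms and truth is $M$-definable, so $M$-Separation gives $\mathrm{Sep}_{\phi,\sigma}(\alpha)\in M$. The paper's proof is exactly this observation, stated more tersely.
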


\begin {proof} Not only are most of the axioms trivial to verify,
in this case even Separation is too. Given a formula $\phi$, term
$\sigma$, and node $nd$, let Sep$_{\phi, \sigma}(nd)$ be $\{ \tau
\mid \tau \in \sigma$ and $nd \models \phi(\tau) \}.$ The reason
that at node $\alpha$ this is in $M$ is that, at $\alpha$,
$\phi$'s parameters can also be interpreted in $M$, and so truth
at $\alpha$ is definable in $M$.
\end {proof}

\section {Questions}
\begin {itemize}
\item We have seen that \FANF holds in every topological model,
and that \FANP holds in the model over any connected Heyting
algebra. Are there any other sufficient or necessary properties
for any of the various fan theorems we have been considering to
hold or fail in a Heyting-valued model?

\item As a particular instance of the previous question, if a
Heyting algebra satisfies \FAND (resp. \FANc), does it
automatically satisfy \FANc (resp. \FANP)?

\item Although we were not able to make use of any Heyting
algebras other than $\Omega$, some seem worthwhile to investigate,
as possibly separating some of these fan theorems, or perhaps
having some other interesting properties. We would include among
these $K(T)$ for various natural spaces $T$, such as $2^\NN$. We
would also include other ways of killing points, such as over a
measure space $\tau$ with measure $\lambda$ modding out by sets of
measure 0:

\[U \sim V \iff \lambda(U ) = \lambda(V) = \lambda ( V \cap U) \]
(two opens are equivalent if their symmetric difference is of
measure zero). The space $\tau / \sim$ should be a Heyting
algebra, which we will denote by analogy with $K$ as $L(\tau)$. Of
particlar interest seem to be $L(I)$ and $L(I \times I)$.

\item In the models presented here, the principles in question
were not true. That's different from their being false (meaning
their negations being true). We expect this could be done by
iterating the constructions presented here. That is, to each
terminal node of the model append another model of the same kind,
starting with the ambient universe of that terminal node as the
new ground model. By iterating this procedure infinitely often,
one is left with a Kripke model with no terminal nodes. In order
still to have a model of IZF, to get the Power Set Axiom for
instance, terms for all of these bars from the iteration might
have to be present at $\bot$, or perhaps some other fix would
work. So this suggestion would at least take some work to
implement, and might even demand some new ideas.

It would be even better, or at least different, if we had a model
with one fixed counter-example. Maybe the models presented here
could be so tweaked. For instance, for \FAND, could we just throw
away the terminal nodes? For \FANc, it might work not to stop a
node just because $\neg b'$ is true, but rather to continue
extending the node to allow finitely many $\neg b'$s to be true.
Or maybe a more radical idea is needed.

\item One of the referees asked about the role of Choice here. It
is not that hard to see that Dependent Choice fails in most (or
all) of these models. Are there some nice choice principles that
are true here? Are there other models in which DC or other choice
principles of interest hold? Are there significant fragments of
Choice that are incompatible with these separations?

\item Within reverse classical mathematics, many weakenings of
Weak K\"onig's Lemma (classically equivalent to the Fan Theorem)
have been identified. Of interest to us here is Weak Weak
K\"onig's Lemma. Whereas WKL states that any bar (closed under
extension, for simplicity) contains an entire level of $2^*$, WWKL
states that any bar contains half of a level. (WWKL has been shown
to be connected to the development of measure theory.) In our
context, any of the principles we have been considering could be
so weakened, yielding Weak \FAND, Weak \FANc, Weak \FANP, and Weak
\FANF. Clearly any principle implies its weak correlate (e.g.
\FANP implies Weak \FANP), and any weak principle implies the weak
principles lower down (e.g. Weak \FANP implies Weak \FANc),
forming a bit of a square. Are there any implications along the
diagonal (e.g. between \FANc and Weak \FANP)? Are these weak
principles even natural or interesting, by being equivalent with
interesting theorems?

\item Are there any other interesting principles to be found here,
for instance $\Pi^0_n$-FAN for $n > 1$, or adaptations of reverse
math principles beneath WKL other than WWKL?

\end{itemize}
\begin{bibdiv}
\begin{biblist}
\bib{Bees}{book}{
    Author = {Michael Beeson},
    Publisher = {Springer-Verlag},
    Title = {Foundations of Constructive Mathematics},
    Year = {1985}}

\bib{jB06}{article}{
      author={Berger, Josef},
       title={The logical strength of the uniform continuity theorem},
        date={2006},
   booktitle={Logical Approaches to Computational Barriers, Proceedings of CiE 2006, LNCS 3988},
      editor={Beckmann, A.},
      editor={Berger, U.},
      editor={L{\"{o}}we, B.},
      editor={Tucker, J.~V.},
      series={Lecture Notes in Computer Sciences},
   publisher={Springer Berlin / Heidelberg},
       pages={35\ndash 39},
}

\bib{jB09}{article}{
      author={Berger, Josef},
       title={A separation result for varieties of Brouwer's Fan Theorem},
        date={2010},
   booktitle={Proceedings of the 10th Asian Logic Conference (ALC 10),
   Kobe University in Kobe, Hyogo, Japan, September 1-6, 2008},
  editor={Arai et al.},
  Publisher = {World Scientific},
  pages={85\ndash  92}
}

\bib{dB85}{book}{
    Author = {Errett Bishop and Douglas Bridges},
    Owner = {UC},
    Publisher = {Springer-Verlag},
    Title = {Constructive Analysis},
    Year = {1985}}

\bib{hD08b}{thesis}{
      author={Diener, Hannes},
       title={Compactness under constructive scrutiny},
        type={Ph.D. Thesis},
        date={2008},
}

\bib{DL}{article}{
    Author = {Hannes Diener and Iris Loeb},
    Title = {Sequences of real functions on [0, 1] in constructive
    reverse mathematics},
    Journal = {Annals of Pure and Applied Logic},
    Volume = {157(1)},
    Pages = {pp. 50-61},
    Year = {2009}
}

\bib{mF79}{article}{
    Author = {M. Fourman and J. Hyland},
    Title = {Sheaf models for analysis},
    Pages = {280-301},
    Book = {
    title = {Applications of Sheaves},
    Editor = {Fourman, Michael},
    editor = {Mulvey, Christopher},
    editor = {Scott, Dana},
    Publisher = {Springer Berlin / Heidelberg},
    Series = {Lecture Notes in Mathematics},
    Url = {http://dx.doi.org/10.1007/BFb0061823},
    Volume = {753},
    Year = {1979},
    }}

\bib{JR}{article}{
    Author = {William Julian and Fred Richman},
    Title = {A uniformly continuous function on [0,1]  that is everywhere
    different from its infimum},
    Journal = {Pacific Journal of Mathematics},
    Volume = {111(2)},
    Pages = {pp. 333-340},
    Year = {1984}
}

\bib{L}{article}{
    Author = {Robert S. Lubarsky},
    Title = {Independence Results around Constructive ZF},
    Journal = {Annals of Pure and Applied Logic},
    Volume = {132(2-3)},
    Pages = {pp. 209-225},
    Year = {2005}
}

\bib{SS}{book}{
    Author = {Stephen Simpson},
    Publisher = {ASL/Cambridge University Press},
    Title = {Subsystems of Second Order Arithmetic},
    Year = {2009}}

\end{biblist}
\end{bibdiv}
\end{document}